\numberwithin{equation}{section}
\newtheorem{thm}{Theorem}
\newtheorem{lem}[thm]{Lemma}
\newtheorem{prop}[thm]{Proposition}
\newtheorem{conj}[thm]{Conjecture}
\theoremstyle{definition}
\theoremstyle{remark}
\newtheorem{rem}[thm]{Remark}
\def\R{\mathbb R}
\def\N{\mathbb N}
\def\SS{\mathbb S}
\def\pt{\partial}
\begin{document}
\title[On the spectra of three Steklov eigenvalue problems]{On the spectra of three Steklov eigenvalue problems on warped product manifolds}
\author{Changwei~Xiong}
\address{Mathematical Sciences Institute, Australian National University, Canberra, ACT 2601, Australia}
\email{changwei.xiong@anu.edu.au}
\date{\today}
\thanks{This research was supported by Australian Laureate Fellowship FL150100126 of the Australian Research Council. The author would like to express his sincere gratitude to Ben~Andrews for stimulating discussions and valuable suggestions.}
\subjclass[2010]{{35P15}, {58C40}}
\keywords{Spectrum; Steklov eigenvalue problem; Eigenvalue estimate; Warped product manifold}

\maketitle

\begin{abstract}
Let $M^n=[0,R)\times \mathbb{S}^{n-1}$ be an $n$-dimensional ($n\geq 2$) smooth Riemannian manifold equipped with the warped product metric $g=dr^2+h^2(r)g_{\mathbb{S}^{n-1}}$ and diffeomorphic to a Euclidean ball. Assume that $M$ has strictly convex boundary. First, for the classical Steklov eigenvalue problem, we obtain an optimal lower (upper, respectively) bound for its spectrum in terms of $h'(R)/h(R)$ when $Ric_g\geq 0$ ($\leq 0$, respectively). Second, for  two fourth-order Steklov eigenvalue problems studied by Kuttler and Sigillito in 1968, we derive a lower bound for their spectra in terms of either $h'(R)/h^3(R)$ or $h'(R)/h(R)$ when $Ric_g\geq 0$, which is optimal for certain cases; in particular, we confirm a conjecture raised by Q.~Wang and C.~Xia for warped product manifolds of dimension $n=2$ or $n\geq 4$. For some proofs we utilize the Reilly's formula and reveal a new feature on its use.
\end{abstract}

\section{Introduction}\label{sec1}

One of the most important and extensively-studied topics in differential geometry is the estimate for various kinds of eigenvalues. Well-known eigenvalue problems include the closed Laplacian eigenvalue problem, Dirichlet eigenvalue problem and Neumann eigenvalue problem. Compared with these eigenvalue problems, the Steklov eigenvalue problem received less attention in the past. However, recently there has been increasing interest in the estimate for the Steklov eigenvalue problem, especially since Fraser and Schoen's work \cite{FS11}. In this paper we are concerned with estimates for the spectra of three types of Steklov eigenvalue problems. Note that generally if the parameter (or the eigenvalue) appears on the boundary of a Riemannian manifold, the problem is called a Steklov (-type) eigenvalue problem.

Let $(M^n,g)$ be an $n$-dimensional ($n\geq 2$) connected compact smooth Riemannian manifold with boundary $\pt M$. In the first part of this paper, we consider the classical Steklov eigenvalue problem, introduced by Steklov in 1895 (see \cite{KKK14}, \cite{Ste02}):
\begin{equation}\label{S1}
\begin{cases}
\Delta \varphi=0,&\text{ in } M,\\
\dfrac{\partial \varphi}{\partial \nu}=\sigma \varphi, &\text{ on } \pt M,
\end{cases}
\end{equation}
where $\Delta$ denotes the Laplace--Beltrami operator of $M$ and $\nu$ is the outward unit normal along $\pt M$. Equivalently, the Steklov eigenvalues form the spectrum of the Dirichlet-to-Neumann map $\Lambda:C^\infty(\pt M)\rightarrow C^\infty(\pt M)$ defined by
\begin{equation*}
\Lambda \varphi=\frac{\pt (\mathcal{H}\varphi)}{\pt \nu},\: \varphi\in C^\infty(\pt M),
\end{equation*}
where $\mathcal{H}\varphi$ is the harmonic extension of $\varphi$ to the interior of $M$. The Dirichlet-to-Neumann map $\Lambda$ is a first-order elliptic pseudodifferential operator \cite[pp. 37--38]{Tay96} and its spectrum is nonnegative, discrete and unbounded (counted with multiplicity):
\begin{equation*}
0=\sigma_0<\sigma_1\leq \sigma_2\leq \cdots \nearrow +\infty.
\end{equation*}
For later use, we denote by $\sigma_{(m)}$ the eigenvalues without counting multiplicity. For instance, for the $n$-dimensional Euclidean ball $B_R$ with radius $R$, we have
\begin{equation*}
\sigma_{(0)}=\sigma_0=0,\quad \sigma_{(1)}=\sigma_1=\cdots=\sigma_{n}=\frac{1}{R},
\end{equation*}
and $\sigma_{(m)}=m/R$ with multiplicity $C_{n+m-1}^{n-1}-C_{n+m-3}^{n-1}$ for $m\geq 2$ (see e.g. \cite{GP14}). Besides, the eigenvalue $\sigma_k$ of the problem \eqref{S1} has the variational characterization:
\begin{equation}\label{V1}
\sigma_k=\inf_{\substack{\varphi\in H^1(M), \: \varphi|_{\pt M}\neq 0\\  \int_{\pt M}\varphi \varphi_i da_g=0,\: i=0,\cdots, k-1}}\frac{\int_M|\nabla \varphi|^2dv_g}{\int_{\pt M} \varphi^2da_g},
\end{equation}
where $H^1(M)$ denotes the standard Sobolev space, $\varphi_i$ is the $i$th eigenfunction, and $dv_g$ and $da_g$ stand for the volume element and the area element of $M$ and $\pt M$, respectively.

There is an extensive literature concerning the Steklov eigenvalue problem~\eqref{S1}. We refer to the recent survey \cite{GP14} and the references therein for an account of this topic. In particular, let us mention some of the motivations (cf. \cite{CEG11,GP14,Esc97,Esc00}) for investigating the Steklov eigenvalue problem \eqref{S1}. First, the Steklov eigenvalue problem can be used as a model for Electrical Impedance Tomography, for the Dirichlet-to-Neumann map is intimately related to the Calder\'{o}n problem \cite{Cal80} on determining the anisotropic conductivity of a body from current and voltage measurements on its boundary. Second, in heat transmission, the eigenfunction $\varphi$ stands for the steady temperature on $M$ with the flux on the boundary $\sigma$-proportional to the temperature. Third, when on a two-dimensional manifold, the Steklov eigenvalues can be viewed as the squares of the natural frequencies of a vibrating free membrane with its mass concentrated on its boundary with constant density (see \cite{LP15}). Fourth, the Steklov eigenvalue problem is also useful in fluid mechanics (see \cite{FK83,KK01}). Last, in view of the variational characterization \eqref{V1} of the first nonzero eigenvalue $\sigma_1$, a sharp lower bound for $\sigma_1$ would imply a sharp Sobolev trace inequality for $\varphi\in H^1(M)$,
\begin{equation}\label{ST}
\sigma_1\int_{\pt M} |\varphi-\bar{\varphi}|^2 da_g\leq \int_M |\nabla \varphi|^2 dv_g,
\end{equation}
where $\bar{\varphi}:=\int_{\pt M} \varphi da_g/|\pt M|_g$ is the average of $\varphi$ on the boundary.

Due to the above backgrounds, there have been many interesting problems on the estimate for the Steklov eigenvalues. Among them the following conjecture was proposed by J.~Escobar \cite{Esc99} in 1999.
\begin{conj}[J.~Escobar \cite{Esc99}]\label{conj1}
Let $(M^n,g)$ $(n\geq 3)$ be a connected compact smooth Riemannian manifold with boundary. Assume that $Ric_g\geq 0$ and that the principal curvatures of the boundary $\pt M$ are bounded below by a constant $c>0$. Then the first nonzero Steklov eigenvalue $\sigma_1$ has a lower bound
\begin{equation*}
\sigma_1\geq c,
\end{equation*}
with equality only for the Euclidean ball of radius $1/c$.
\end{conj}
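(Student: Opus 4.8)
The plan is to run Reilly's formula on a first Steklov eigenfunction and to locate the sharp constant inside its boundary terms. Let $\varphi$ solve $\Delta\varphi=0$ in $M$ and $\partial_\nu\varphi=\sigma_1\varphi$ on $\partial M$, set $u=\varphi|_{\partial M}$ (so that $\int_{\partial M}u\,da_g=0$ and $\partial_\nu\varphi=\sigma_1 u$), and apply Reilly's formula. Because $\Delta\varphi=0$ it reads
\[
\int_M|\nabla^2\varphi|^2\,dv_g+\int_M Ric_g(\nabla\varphi,\nabla\varphi)\,dv_g+\int_{\partial M}\Bigl(H(\partial_\nu\varphi)^2+2(\partial_\nu\varphi)\Delta_{\partial M}u+\mathrm{II}(\nabla_{\partial M}u,\nabla_{\partial M}u)\Bigr)\,da_g=0,
\]
where $H$, $\mathrm{II}$, $\Delta_{\partial M}$ and $\nabla_{\partial M}$ are the mean curvature, second fundamental form, Laplacian and gradient of $\partial M$. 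Since $Ric_g\geq0$ and $|\nabla^2\varphi|^2\geq0$, the whole boundary integral must be $\leq0$; integrating its middle term by parts on the closed hypersurface $\partial M$ and using $\partial_\nu\varphi=\sigma_1 u$ replaces $\int_{\partial M}2(\partial_\nu\varphi)\Delta_{\partial M}u\,da_g$ by $-2\sigma_1\int_{\partial M}|\nabla_{\partial M}u|^2\,da_g$.

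First I would record the baseline this already yields. Using $H\geq(n-1)c$ and $\mathrm{II}\geq c\,g$, the boundary inequality becomes
\[
(n-1)c\,\sigma_1^2\int_{\partial M}u^2\,da_g\leq(2\sigma_1-c)\int_{\partial M}|\nabla_{\partial M}u|^2\,da_g.
\]
Since the left side is strictly positive, the coefficient $2\sigma_1-c$ cannot be negative, so $\sigma_1\geq c/2$. It is worth noting that on the Euclidean ball $B_{1/c}$ every step above is an equality, so the inequality is structurally sharp; the reason it only pins $\sigma_1\geq c/2$ is that it controls the boundary Dirichlet quotient $\mu:=\int_{\partial M}|\nabla_{\partial M}u|^2/\int_{\partial M}u^2$ only from below, whereas forcing $\sigma_1\geq c$ requires an upper bound on $\mu$.

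This is where the main obstacle sits, and my strategy is to manufacture the missing upper control on $\mu$ from the interior Hessian energy that the crude argument discarded. The bridge is the boundary decomposition $\Delta_{\partial M}u=-H\partial_\nu\varphi-\nabla^2\varphi(\nu,\nu)$, which with $\partial_\nu\varphi=\sigma_1 u$ gives
\[
\int_{\partial M}|\nabla_{\partial M}u|^2\,da_g=\sigma_1\int_{\partial M}Hu^2\,da_g+\int_{\partial M}u\,\nabla^2\varphi(\nu,\nu)\,da_g.
\]
The hard part will be to bound the last term by the retained quantity $\int_M|\nabla^2\varphi|^2\,dv_g$. Calibrating against the ball, where $\nabla^2\varphi\equiv0$, $\mu=(n-1)c^2$, and both the interior energy and the boundary term $\int_{\partial M}u\,\nabla^2\varphi(\nu,\nu)\,da_g$ vanish simultaneously, indicates that the estimate to prove is a trace-type inequality controlling $\int_{\partial M}u\,\nabla^2\varphi(\nu,\nu)\,da_g$ by $\int_M|\nabla^2\varphi|^2\,dv_g$ and saturated exactly when $\nabla^2\varphi\equiv0$; feeding such an inequality back should close the admissible window for $\sigma_1$ down to $\sigma_1\geq c$. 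I expect this passage from an interior integral of the trace-free Hessian of the harmonic function $\varphi$ to a boundary integral, uniformly in the dimension, to be the genuinely delicate point; it is precisely the step that separation of variables would trivialize under a warped-product hypothesis, and in general I would attack it through a Bochner/commutation argument for $\varphi$ combined with the tangential Hessian identity $\nabla^2\varphi(X,Y)=\nabla^2_{\partial M}u(X,Y)+\mathrm{II}(X,Y)\,\partial_\nu\varphi$ for $X,Y$ tangent to $\partial M$.

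Finally, for the rigidity I would trace equality backwards: $\sigma_1=c$ forces $\nabla^2\varphi\equiv0$, $Ric_g(\nabla\varphi,\nabla\varphi)\equiv0$, $H\equiv(n-1)c$, and $\mathrm{II}\equiv c\,g$. A nonconstant function with $\nabla^2\varphi\equiv0$ has parallel gradient, so $M$ splits along the flow of $\nabla\varphi$ with totally geodesic level sets; showing that the first eigenspace is $n$-dimensional then produces $n$ functions with parallel, orthonormal gradients, which serve as flat coordinates and make $g$ Euclidean. The totally umbilic boundary with constant principal curvature $c$ is then a round sphere of radius $1/c$, identifying $(M,g)$ isometrically with $B_{1/c}$, as claimed.
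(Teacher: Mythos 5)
You should first note that the statement you are proving is presented in the paper as a \emph{conjecture} (Escobar, 1999), open in general; the paper confirms it only for warped product metrics $g=dr^2+h^2(r)g_{\mathbb{S}^{n-1}}$ (Theorem~\ref{thm1}), and even there not by Reilly's formula but by separation of variables (Proposition~\ref{prop1}) followed by an elementary ODE comparison: the monotonicity of $q(r)=h(r)\psi'(r)-mh'(r)\psi(r)$ in Proposition~\ref{prop2}. Your proposal is a program rather than a proof, and the portion you actually complete is precisely Escobar's 1997 argument: applying Reilly's formula \eqref{Rei} to a harmonic Steklov eigenfunction, integrating the term $2(\partial_\nu\varphi)\Delta^{\partial}u$ by parts, and using $H\geq (n-1)c$, $II\geq c\,g$ to get $(n-1)c\,\sigma_1^2\int_{\partial M}u^2\leq (2\sigma_1-c)\int_{\partial M}|\nabla^{\partial}u|^2$, hence only the known nonsharp bound $\sigma_1> c/2$. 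Your baseline computation and the bridge identity $\Delta^{\partial}u=-H\partial_\nu\varphi-\nabla^2\varphi(\nu,\nu)$ are both correct, but they do not go beyond the literature.

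The genuine gap is the step you yourself flag: the ``trace-type inequality'' controlling $\int_{\partial M}u\,\nabla^2\varphi(\nu,\nu)\,da_g$ by $\int_M|\nabla^2\varphi|^2\,dv_g$ is not proved, and there are concrete reasons it cannot hold in the form you need. An $L^2(M)$ bound on the Hessian does not control the boundary trace of $\nabla^2\varphi(\nu,\nu)$ (a trace estimate costs one more derivative), and the two integrals scale differently under metric dilation $g\mapsto t^2g$ (as $t^{n-3}$ versus $t^{n-4}$), so any constant in such an inequality must encode exactly the unknown geometry of $(M,g)$; moreover you have no sign information on $\int_{\partial M}u\,\nabla^2\varphi(\nu,\nu)\,da_g$ beyond its vanishing on the ball. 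Quantitatively ``closing the admissible window down to $\sigma_1\geq c$'' by this route \emph{is} the open content of the conjecture, not a technical lemma; the paper's Remark~\ref{key-remark} makes the related point that the refined bookkeeping of interior terms that salvages sharpness in the warped-product case (there, separating a positive piece of the Ricci integral) ``seems impossible for general Riemannian manifolds.'' Finally, your rigidity sketch presupposes that the first eigenspace is $n$-dimensional, which is itself unestablished in the general setting (in the paper it follows from the separation-of-variables structure, Proposition~\ref{prop1}). So the proposal, while a reasonable reconstruction of why the conjecture is plausible and why Reilly's formula stalls at $c/2$, does not constitute a proof of the statement.
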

For $n=2$, the above result was proved by L.~E.~Payne \cite{Pay70} in 1970 for the Euclidean case, and by J.~Escobar \cite{Esc97} in 1997 for the Riemannian case (assuming the Gaussian curvature $K\geq 0$). For $n\geq 3$, J.~Escobar \cite{Esc97} obtained the nonsharp lower bound $\sigma_1>c/2$ in 1997 by use of the Reilly's formula. Also for $n\geq 3$, Monta\~{n}o  confirmed Conjecture~\ref{conj1} for a ball equipped with rotationally invariant metric \cite{Mon13} and for Euclidean ellipsoids \cite{Mon16}.

Motivated by Escobar's Conjecture~\ref{conj1}, in this paper we first consider to estimate the spectrum of the Steklov problem~\eqref{S1} in terms of the boundary curvature of the manifold under the condition on its Ricci curvature. For other works of a similar flavour, see e.g. \cite{PS19,Xio18,CGH18}. To obtain optimal estimates, we restrict ourselves to the special case where $M^n$ is a warped product manifold. Let $M^n=[0,R)\times \SS^{n-1}$ be a smooth Riemannian manifold equipped with the warped product metric
\begin{equation*}
g=dr^2+h^2(r)g_{\SS^{n-1}}.
\end{equation*}
Note that A.~Kasue \cite{Kas83} and R.~Ichida \cite{Ich81}, independently, showed that if a Riemannian manifold $M$ has nonnegative Ricci curvature and (weakly) mean convex boundary $\pt M$, then either $\pt M$ is connected, or $M$ is isometric to a Riemannian product manifold $\Gamma\times [0,a]$ (with constant warping function). So in view of the setting of Escobar's conjecture, the warped product manifold $M$ which we work on is of only one boundary component. Thus we need impose $h(0)=0$ and $M$ is a topological ball. Moreover, to guarantee that the metric $g$ is smooth at the origin, we need additional conditions on the derivatives of $h(r)$ at $r=0$ (see Section~4.3.4 in Petersen's book \cite{Pet16}). To sum up, let us agree to the following condition on $h$ throughout the paper.
\begin{itemize}
  \item[(A)] $h\in C^\infty([0,R))$, $h(r)>0$ for $r\in (0,R)$, $h'(0)=1$ and $h^{(2k)}(0)=0$ for all integers $k\geq 0$.
\end{itemize}

Now we are ready to state our first main result, i.e., we prove an optimal lower bound for the spectrum of the Steklov eigenvalue problem~\eqref{S1} when $M$ has nonnegative Ricci curvature and strictly convex boundary.
\begin{thm}\label{thm1}
Let $M^n=[0,R)\times \SS^{n-1}$ be an $n$-dimensional ($n\geq 2$) smooth Riemannian manifold equipped with the warped product metric
\begin{equation*}
g=dr^2+h^2(r)g_{\SS^{n-1}},
\end{equation*}
where the warping function $h$ satisfies Assumption (A). Suppose that $M$ has nonnegative Ricci curvature and strictly convex boundary. Then the $m$th Steklov eigenvalue $\sigma_{(m)}$ of the problem \eqref{S1} without counting multiplicity satisfies
\begin{equation}\label{eq1.4}
\sigma_{(m)}\geq m\frac{h'(R)}{h(R)},\quad m\geq 0,
\end{equation}
with equality if and only if $h(r)=r$, or $M$ is isometric to the Euclidean ball with radius $R$.
\end{thm}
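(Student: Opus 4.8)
The plan is to exploit the warped-product structure through separation of variables and then reduce the whole problem to a one-dimensional comparison for an ordinary differential equation. First I would expand any Steklov eigenfunction in spherical harmonics $Y$ on $\SS^{n-1}$: writing $\varphi=f(r)Y(\theta)$ with $-\Delta_{\SS^{n-1}}Y=\lambda_m Y$, $\lambda_m=m(m+n-2)$, the harmonic equation $\Delta\varphi=0$ separates into
\[
f''+(n-1)\frac{h'}{h}f'-\frac{\lambda_m}{h^2}f=0, \qquad\text{i.e.}\qquad (h^{n-1}f')'=\lambda_m h^{n-3}f.
\]
Since $\nu=\pt_r$ on $\pt M=\{R\}\times\SS^{n-1}$, the boundary condition forces $\sigma=f_m'(R)/f_m(R)=:\mu_m$, where $f_m$ is the solution regular at $r=0$ (behaving like $r^m$). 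Because the spherical harmonics are complete and $\Delta$ preserves their degree, the Steklov spectrum is exactly $\{\mu_m\}_{m\geq 0}$ with the multiplicities of $\lambda_m$. To identify $\sigma_{(m)}$ with $\mu_m$ I would check that $\mu_m$ is strictly increasing in $m$; this follows from a monotone dependence of the boundary log-derivative on $\lambda_m$, provable by the same Wronskian comparison used below.

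Next I would translate the geometric hypotheses into conditions on $h$. A direct curvature computation gives $Ric(\pt_r,\pt_r)=-(n-1)h''/h$ and $Ric(e_i,e_i)=-h''/h+(n-2)(1-(h')^2)/h^2$, so $Ric_g\geq 0$ is equivalent to $h''\leq 0$ together with $hh''+(n-2)((h')^2-1)\leq 0$, while strict convexity of $\pt M$ means $h'(R)>0$ (all principal curvatures equal $h'(R)/h(R)$). Since $h''\leq 0$ and $h'(0)=1$, the derivative $h'$ decreases from $1$ to $h'(R)>0$, hence $0<h'\leq 1$ on $[0,R]$. The heart of the argument is then to compare $f_m$ with the model $u:=h^m$. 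A short computation shows
\[
(h^{n-1}u')'-\lambda_m h^{n-3}u=m\,h^{m+n-3}\big[(m+n-2)((h')^2-1)+hh''\big],
\]
and the bracket is nonpositive: it equals $m((h')^2-1)$ plus $[hh''+(n-2)((h')^2-1)]$, both $\leq 0$ by the above. Thus $u=h^m$ is a supersolution. I would also record that $f_m>0$ on $(0,R]$, since on any interval where $f_m>0$ the quantity $h^{n-1}f_m'$ is nondecreasing and vanishes at $0$, forcing $f_m'\geq 0$.

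To close the estimate I would introduce the Wronskian-type quantity $\tilde W:=h^{n-1}(f_m'u-f_mu')$. Writing $\mathcal{L}[f]:=f''+(n-1)(h'/h)f'-(\lambda_m/h^2)f$ for the radial operator, the two (in)equalities above together with $f_m>0$ give $\tilde W'=-f_m\,h^{n-1}\mathcal{L}[u]\geq 0$; moreover $\tilde W(r)\to 0$ as $r\to 0$ because $h^{n-1}\to 0$ while $f_m'u-f_mu'$ stays bounded. Hence $\tilde W(R)\geq 0$, which rearranges to $f_m'(R)/f_m(R)\geq u'(R)/u(R)=m\,h'(R)/h(R)$, i.e. $\mu_m\geq m\,h'(R)/h(R)$. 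For rigidity, equality for some $m\geq 1$ forces $\tilde W\equiv 0$, hence $\mathcal{L}[h^m]\equiv 0$, i.e. $hh''=(m+n-2)(1-(h')^2)\geq 0$; combined with $h''\leq 0$ this yields $h''\equiv 0$ and $(h')^2\equiv 1$, so $h(r)=r$ and $M$ is the Euclidean ball of radius $R$.

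The main obstacle I anticipate is not the one-dimensional comparison itself but the bookkeeping around $r=0$: one must verify that the regular solution $f_m$ has the correct $r^m$ asymptotics, that the boundary term $\tilde W(0^+)$ genuinely vanishes (so that the monotonicity of $\tilde W$ can be integrated from the singular endpoint), and that the separated eigenvalues $\mu_m$ are strictly increasing in $m$ so that they coincide with the unmultiplied spectrum $\sigma_{(m)}$. Establishing the supersolution inequality with the correct sign for all $n\geq 2$ — in particular deducing $0<h'\leq 1$ from $Ric_g\geq 0$ and strict convexity — is the linchpin that makes the sign of $\tilde W'$ work out.
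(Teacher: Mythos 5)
Your proposal is correct, and its skeleton matches the paper's: the same separation of variables (the paper's Proposition \ref{prop1}) reduces everything to the radial problem $(h^{n-1}\psi')'=\tau_m h^{n-3}\psi$, $\psi(0)=0$, with $\sigma_{(m)}=\psi'(R)/\psi(R)$, and the same curvature translation $h''\le 0$, $0<h'\le 1$ (the paper's lemma in Section~\ref{sec2}). Where you genuinely differ is in the spectral ordering and, more importantly, the one-dimensional comparison. For the ordering of the separated eigenvalues the paper uses the variational characterization (the Rayleigh quotient is strictly increasing in $\tau_m$), while you reuse a Wronskian comparison; both work. For the key estimate, the paper (Proposition \ref{prop2}) sets $q:=h\psi'-mh'\psi$, bounds $q'\ge u$ with the auxiliary function $u=\tau_m\psi/h-(\tau_m/m)h'\psi'$, and then proves $u\ge 0$ via a Gronwall-type inequality $u'\ge -Cu$ with $u(0)=0$; you instead exhibit $h^m$ as an explicit supersolution and run a monotonicity argument on the Wronskian $\tilde W=h^{n-1}\bigl(f_m'h^m-f_m(h^m)'\bigr)$. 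Since $\tilde W=h^{n+m-2}q$, the two quantities agree up to a positive factor, but your route is cleaner: the derivative is computed exactly, $\tilde W'=-f_m\,m\,h^{n+m-3}\bigl[(m+n-2)((h')^2-1)+hh''\bigr]\ge 0$, so no intermediate function or Gronwall step is needed, and the rigidity statement drops out at once (vanishing of the supersolution defect forces $h''\equiv 0$ and $h'\equiv 1$, hence $h(r)=r$). Two minor remarks: in estimating the bracket you invoke the combined Ricci inequality $hh''+(n-2)((h')^2-1)\le 0$, though $hh''\le 0$ and $(h')^2\le 1$ separately already suffice, so your proof, like the paper's, really only uses these two consequences of the hypotheses; and the delicate points you flag at $r=0$ (the $r^m$ asymptotics of the regular solution, hence $\tilde W(0^+)=0$) are handled at the same level of rigor in the paper via \eqref{Asym}, so they are not gaps relative to the paper's own standard.
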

Note that the boundary of $M$, the slice $\{R\}\times \SS^{n-1}$ in $M$, is totally umbilical with principal curvatures equal to $h'(R)/h(R)$. In view of Escobar's Conjecture~\ref{conj1}, the lower bound in terms of $h'(R)/h(R)$ as in \eqref{eq1.4} is quite natural, which may inspire the investigation on general Riemannian manifolds. As mentioned above, Theorem~\ref{thm1} has been proved by Escobar \cite{Esc97} for $n=2$ and $m=1$, and by Monta\~{n}o \cite{Mon13} for $n\geq 3$ and $m=1$. Both proofs in \cite{Esc97} and \cite{Mon13} are different from ours. Our proof is more direct.

We can also obtain a parallel result for the case $Ric_g\leq 0$.
\begin{thm}\label{thm1.1}
Assumptions as in Theorem~\ref{thm1} except $Ric_g\geq 0$ replaced by $Ric_g\leq 0$. Then the $m$th Steklov eigenvalue $\sigma_{(m)}$ of the problem \eqref{S1} without counting multiplicity satisfies
\begin{equation}
\sigma_{(m)}\leq m\frac{h'(R)}{h(R)},\quad m\geq 0,
\end{equation}
with equality if and only if $h(r)=r$, or $M$ is isometric to the Euclidean ball with radius $R$.
\end{thm}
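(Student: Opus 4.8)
The plan is to reduce everything to a one–dimensional comparison by separation of variables, exactly as in the proof of Theorem~\ref{thm1}, and then to reverse the sign coming from the curvature hypothesis. Writing a boundary function as a spherical harmonic $Y_m$ of degree $m$ on $\SS^{n-1}$, so that $\Delta_{\SS^{n-1}}Y_m=-m(m+n-2)Y_m$, the harmonic extension is $\mathcal H(Y_m)=(f_m(r)/f_m(R))Y_m$, where $f_m$ is the solution, smooth at $r=0$ with $f_m(r)\sim r^m$, of
\begin{equation*}
f_m''+(n-1)\frac{h'}{h}f_m'-\frac{m(m+n-2)}{h^2}f_m=0.
\end{equation*}
Since the outward normal at $\{R\}\times\SS^{n-1}$ is $\pt_r$, the degree-$m$ modes contribute the single Steklov value $f_m'(R)/f_m(R)$, and, as in the proof of Theorem~\ref{thm1}, the distinct eigenvalues are exactly $\sigma_{(m)}=f_m'(R)/f_m(R)$. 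Rewriting the ODE in divergence form $(h^{n-1}f_m')'=m(m+n-2)h^{n-3}f_m$ shows $h^{n-1}f_m'$ is nondecreasing while $f_m>0$; since $f_m'\sim m\,r^{m-1}>0$ near $0$, this forces $f_m>0$ and $f_m'>0$ on $(0,R]$, so $\phi_m:=f_m'/f_m>0$ is well defined.

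Next I would run a Riccati comparison between $\phi_m$ and the model quantity $P:=m\,h'/h$. From the ODE one has $\phi_m'+\phi_m^2+(n-1)(h'/h)\phi_m=m(m+n-2)/h^2$, while a direct computation gives $P'+P^2+(n-1)(h'/h)P=m(m+n-2)/h^2+g$, where
\begin{equation*}
g(r)=m\frac{h''}{h}-m(m+n-2)\frac{1-(h')^2}{h^2}.
\end{equation*}
Hence $u:=\phi_m-P$ solves the linear equation $u'+\big[\phi_m+P+(n-1)h'/h\big]u=-g$, and $u$ is bounded at $r=0$ because the $m/r$ singularities of $\phi_m$ and $P$ cancel. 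Multiplying by a positive integrating factor $\mu$ (which vanishes like $r^{2m+n-1}$ at $0$, so the boundary contribution there drops out) yields $u(R)=-\mu(R)^{-1}\int_0^R g\,\mu$. Thus the sign of $\sigma_{(m)}-m\,h'(R)/h(R)=u(R)$ is opposite to that of $g$ on $[0,R]$.

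It remains to fix the sign of $g$ from the curvature hypothesis; this is where the case $Ric_g\le0$ diverges from Theorem~\ref{thm1}, and it is the crux. Using the warped-product formulas $Ric_g(\pt_r,\pt_r)=-(n-1)h''/h$ and $Ric_g(e,e)=-h''/h+(n-2)(1-(h')^2)/h^2$ for $e$ tangent to the fibre, one finds
\begin{equation*}
g=-m\Big(-\frac{h''}{h}+(n-2)\frac{1-(h')^2}{h^2}\Big)-m^2\frac{1-(h')^2}{h^2}.
\end{equation*}
When $Ric_g\le0$, the radial component gives $h''\ge0$, hence $h'\ge h'(0)=1$, so the fibre sectional curvature $(1-(h')^2)/h^2\le0$; and the tangential component is $\le0$ by hypothesis. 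Both bracketed terms are therefore $\le0$, so $g\ge0$ and $u(R)\le0$, i.e.\ $\sigma_{(m)}\le m\,h'(R)/h(R)$. (For $Ric_g\ge0$ the same computation gives $h'\le1$ and $g\le0$, recovering Theorem~\ref{thm1}.)

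For the equality discussion, $u(R)=0$ forces $\int_0^R g\,\mu=0$, whence $g\equiv0$; since the two nonpositive contributions must then vanish separately, $(1-(h')^2)/h^2\equiv0$, and with $h'\ge1>0$ this gives $h'\equiv1$, i.e.\ $h(r)=r$, the Euclidean ball; the converse is immediate. I expect the main obstacle to be recognizing that the single hypothesis $Ric_g\le0$ controls \emph{both} curvature terms in $g$ with the correct sign simultaneously; the integrating-factor step and the asymptotics at $r=0$ are routine but require care at the origin.
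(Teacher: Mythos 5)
Your proof is correct, and it shares its first step with the paper's: separation of variables (Proposition~\ref{prop1}, whose proof is insensitive to the sign of $Ric_g$) reduces the theorem to showing $\psi'(R)/\psi(R)\le m\,h'(R)/h(R)$ for the regular solution $\psi\sim r^m$ of the radial ODE. The second step is where you genuinely differ. The paper (Proposition~\ref{prop2}, with all inequalities reversed using $h''\ge 0$ and $h'\ge 1$) introduces $q:=h\psi'-m h'\psi$ and an auxiliary function $w:=h\psi''+(1-m)h'\psi'$, and runs a chain of pointwise differential inequalities plus a Gronwall-type argument to get $q'\le w\le 0$, hence $q(R)\le q(0)=0$. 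You instead compare Riccati equations for $\phi_m=\psi'/\psi$ and $P=m h'/h$ and integrate the linear equation for $u=\phi_m-P$ exactly, arriving at the identity $u(R)=-\mu(R)^{-1}\int_0^R g\,\mu$ with the source $g$ expressed through the radial and tangential Ricci eigenvalues. Since $u=q/(h\psi)$, the two arguments estimate the same quantity, but yours trades the inequality chain for an exact formula: this makes the role of the hypothesis transparent (the radial condition $h''\ge 0$ forces $h'\ge 1$, after which both terms of $g$ are nonnegative), treats $Ric_g\ge 0$ and $Ric_g\le 0$ symmetrically, and yields the rigidity statement at once ($\int_0^R g\mu=0$ forces $g\equiv 0$, hence $h'\equiv 1$ and $h(r)=r$), whereas the paper's route requires rechecking that each inequality of Proposition~\ref{prop2} reverses. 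The cost is the analysis at the origin: $\phi_m+P+(n-1)h'/h$ is not integrable at $r=0$, so $\mu$ must be normalized at an interior point, and the crude asymptotics $\psi=r^m+o(r^m)$ give only $u=o(1/r)$ rather than boundedness --- still sufficient, since $\mu=O(r^{2m+n-1})$ makes $\mu u\to 0$ at the origin, but this should be stated precisely rather than asserted as cancellation of the $m/r$ singularities.
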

Theorem~\ref{thm1.1} has been proved by Monta\~{n}o \cite{Mon13a} for $n\geq 2$ and $m=1$ using a different argument.

In the second part of this paper, we consider a fourth-order Steklov eigenvalue problem, which was initially investigated by J.~R.~Kuttler and V.~G.~Sigillito \cite{KS68} in 1968:
\begin{equation}\label{S2}
\begin{cases}
\Delta^2 \varphi=0,\quad &\text{in }M,\\
\dfrac{\pt \varphi}{\pt \nu}=0,\quad \dfrac{\pt(\Delta \varphi)}{\pt \nu}+\xi \varphi=0,\quad &\text{on }\pt M.
\end{cases}
\end{equation}
Here the constant $\xi$ denotes the eigenvalue. The eigenvalue problem \eqref{S2} is important in biharmonic analysis and elastic mechanics. In particular, in two-dimensional case the eigenfunction $\varphi$ represents the deformation of the linear elastic supported plate $M$ under the action of the transversal exterior force $f(x)=0$, $x\in M$ subject to Neumann boundary condition (see \cite{Vil97,TG70,XW18}). In addition, the first nonzero eigenvalue $\xi_1$ arises as an optimal constant in an a priori inequality (see \cite{Kut82}). The eigenvalues of the problem \eqref{S2} form a discrete and increasing sequence (counted with multiplicity):
\begin{equation*}
0=\xi_0<\xi_1\leq \xi_2\leq \cdots \nearrow +\infty.
\end{equation*}
We also use $\xi_{(m)}$ to denote the eigenvalues without counting multiplicity. For the $n$-dimensional Euclidean ball $B_R$ with radius $R$, we have $\xi_{(m)}=m^2(n+2m)/R^3$ with multiplicity $C_{n+m-1}^{n-1}-C_{n+m-3}^{n-1}$ (see \cite[Theorem~1.5]{XW18}). In addition, the eigenvalue $\xi_k$ has the variational characterization:
\begin{equation*}
\xi_k=\inf_{\substack{ \varphi\in H^2(M), \: \pt_\nu \varphi|_{\pt M}=0,\:\varphi|_{\pt M}\neq 0\\ \int_{\pt M}\varphi \varphi_i da_g=0,\: i=0,\cdots, k-1}}\frac{\int_M(\Delta \varphi)^2dv_g}{\int_{\pt M} \varphi^2da_g},
\end{equation*}
where $\varphi_i$ is the $i$th eigenfunction.

As a motivation for our work, let us mention the following conjecture on the sharp lower bound of the first nonzero eigenvalue $\xi_1$ of the Steklov problem \eqref{S2} proposed by Qiaoling~Wang and Changyu~Xia \cite{WX18}.
\begin{conj}[Q.~Wang and C.~Xia \cite{WX18}]\label{conj2}
Let $(M^n,g)$ $(n\geq 2)$ be a connected compact smooth Riemannian manifold with boundary. Assume that $Ric_g\geq 0$ and that the principal curvatures of the boundary $\pt M$ are bounded below by a constant $c>0$. Denote by $\lambda_1=\lambda_1(\pt M)$ the first nonzero eigenvalue of the Laplacian of $\pt M$. Then the first nonzero Steklov eigenvalue $\xi_1$ has a lower bound
\begin{equation}\label{bound-conj2}
\xi_1\geq \frac{n+2}{n-1}c\lambda_1,
\end{equation}
with equality only for the Euclidean ball of radius $1/c$.
\end{conj}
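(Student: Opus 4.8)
The plan is to attack Conjecture~\ref{conj2} by combining the variational characterization of $\xi_1$ with Reilly's formula. Let $\varphi$ be a first eigenfunction, so that
\begin{equation*}
\xi_1=\frac{\int_M(\Delta\varphi)^2\,dv_g}{\int_{\pt M}\varphi^2\,da_g},\qquad \frac{\pt\varphi}{\pt\nu}\Big|_{\pt M}=0 .
\end{equation*}
Since $\xi_0=0$ with constant eigenfunction $\varphi_0$, the orthogonality $\int_{\pt M}\varphi\varphi_0\,da_g=0$ forces $z:=\varphi|_{\pt M}$ to have zero average on $\pt M$. Hence the first nonzero eigenvalue $\lambda_1=\lambda_1(\pt M)$ of the boundary Laplacian gives $\int_{\pt M}|\nabla_{\pt M}z|^2\,da_g\geq\lambda_1\int_{\pt M}z^2\,da_g$, which is how the factor $\lambda_1$ in \eqref{bound-conj2} should enter.

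Next I would feed $\varphi$ into Reilly's formula. Because $\pt_\nu\varphi=0$ on $\pt M$, the mean--curvature and cross terms drop out and (with the sign convention making $\mathrm{II}$ positive on convex boundaries) one is left with
\begin{equation*}
\int_M(\Delta\varphi)^2\,dv_g=\int_M|\nabla^2\varphi|^2\,dv_g+\int_M Ric_g(\nabla\varphi,\nabla\varphi)\,dv_g+\int_{\pt M}\mathrm{II}(\nabla_{\pt M}z,\nabla_{\pt M}z)\,da_g .
\end{equation*}
The hypotheses are tailored to this identity: $Ric_g\geq0$ discards the interior curvature integral, while the principal--curvature bound $\mathrm{II}\geq c\,g_{\pt M}$ together with the previous Rayleigh inequality yields
\begin{equation*}
\int_{\pt M}\mathrm{II}(\nabla_{\pt M}z,\nabla_{\pt M}z)\,da_g\geq c\int_{\pt M}|\nabla_{\pt M}z|^2\,da_g\geq c\lambda_1\int_{\pt M}\varphi^2\,da_g .
\end{equation*}

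To produce a lower bound for $\xi_1$ I would then control $\int_M|\nabla^2\varphi|^2$ from below. Writing $E:=\nabla^2\varphi-\tfrac1n(\Delta\varphi)\,g$ for the trace--free Hessian, one has $|\nabla^2\varphi|^2=|E|^2+\tfrac1n(\Delta\varphi)^2$, so the identity above rearranges to
\begin{equation*}
\frac{n-1}{n}\int_M(\Delta\varphi)^2\,dv_g=\int_M|E|^2\,dv_g+\int_M Ric_g(\nabla\varphi,\nabla\varphi)\,dv_g+\int_{\pt M}\mathrm{II}(\nabla_{\pt M}z,\nabla_{\pt M}z)\,da_g .
\end{equation*}
Simply discarding $\int_M|E|^2$ and the Ricci term already gives the \emph{non-sharp} bound $\xi_1\geq\frac{n}{n-1}\,c\lambda_1$, valid on every manifold satisfying the hypotheses. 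The target constant $\frac{n+2}{n-1}$ is strictly larger, and the gap is exactly the trace--free Hessian energy $\int_M|E|^2$: in the equality case (the Euclidean ball, where the degree-one biharmonic eigenfunction is $\varphi=\beta(|x|^2-3R^2)x_i$) the tensor $E$ does not vanish, so $\int_M|E|^2$ must be captured, not thrown away.

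Therefore the main obstacle is to bound $\int_M|E|^2$ from below by a multiple of the boundary integral $\int_{\pt M}\varphi^2$ sharp enough to upgrade $\frac{n}{n-1}$ to $\frac{n+2}{n-1}$. This is where I expect a second, less standard use of Reilly's formula to be needed: since $\Delta^2\varphi=0$, the function $\psi:=\Delta\varphi$ is harmonic and satisfies the boundary relation $\pt_\nu\psi=-\xi_1\varphi$, so applying Reilly's identity (or integrating by parts) to $\psi$ and coupling it to the identity for $\varphi$ through this relation should convert the interior Hessian energy into controllable boundary data. Matching the numerical constants in this coupling is the crux; in full Riemannian generality it is delicate and, I suspect, genuinely hard, which is why one is driven to the warped--product setting $g=dr^2+h^2(r)g_{\SS^{n-1}}$, where separation of variables $\varphi=\sum_k f_k(r)Y_k(\theta)$ reduces everything to a one--dimensional Sturm--Liouville comparison and the sharp constant can be tracked mode by mode.
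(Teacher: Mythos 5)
Your proposal is correct as far as it goes, but where it goes is exactly the boundary of what was already known: the combination of Reilly's formula with $\pt_\nu\varphi=0$, the zero-average trace, $II\geq c\,g_{\pt M}$, and the trace-free Hessian split $|\nabla^2\varphi|^2=|E|^2+\tfrac1n(\Delta\varphi)^2$ yields $\xi_1\geq\tfrac{n}{n-1}c\lambda_1$, which is precisely the nonsharp bound of Xia and Wang \cite{XW13} cited right after the conjecture. Everything beyond that in your text is a programme, not a proof, and the crux step is missing: the proposed second application of Reilly's identity to the harmonic function $\psi=\Delta\varphi$ is not carried out, and it founders on a concrete obstruction --- the boundary data for $\psi$ consist of $\pt_\nu\psi=-\xi_1\varphi$ together with the trace $\psi|_{\pt M}$, and the latter is not controlled by the eigenvalue problem, so the boundary terms $(H\pt_\nu\psi+2\Delta^\pt(\psi|_{\pt M}))\pt_\nu\psi+II(\nabla^\pt(\psi|_{\pt M}),\nabla^\pt(\psi|_{\pt M}))$ contain an unknown quantity that your coupling gives no way to absorb. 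No choice of constants is exhibited, so the upgrade from $\tfrac{n}{n-1}$ to $\tfrac{n+2}{n-1}$ is never achieved.

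Moreover, the mechanism you predict is not the one that actually closes the gap in the only case the paper settles (warped products, $n=2$ or $n\geq4$). Your step ``$Ric_g\geq0$ discards the interior curvature integral'' forgoes exactly the paper's key new idea: in Section~\ref{sec4.3} a positive part of the Ricci term, $\int_{B_R}Ric(\nabla\varphi,\nabla\varphi)\geq\tau_m(n-2)\int_0^R(1-(h')^2)\psi^2h^{n-5}$, is \emph{retained} to balance the negative terms, and Remark~\ref{key-remark} states explicitly that without it the sharp constant for $n\geq4$, $m=1$ is unreachable --- so attributing the whole gap to $\int_M|E|^2$ is wrong except at the flat model where $h'\equiv1$. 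Likewise, your trace-free split corresponds to freezing the weight at $c=\tfrac{n-1}{n}$ in the paper's inequality $c\,\xi_{(m)}-\kappa\lambda_{(m)}\geq 0$, whereas the proof requires the optimized $c=\frac{2\tau_m+(n-1)(n-4)}{4\tau_m+n(n-4)}$ (equal to $\tfrac{n-1}{n+2}$ at $m=1$), obtained by completing the square in the explicit warped-product Hessian computation of Proposition~\ref{prop-A}, not by a second Reilly identity. Finally, the closing claim that the warped-product case ``reduces to a one-dimensional Sturm--Liouville comparison'' is too optimistic: the separation of variables (Proposition~\ref{prop3}) itself requires nontrivial work (ordering of the mode energies $\beta_k$, and an $H^2$-justification near the cone point via Reilly's formula on $B_R\setminus B_\e$); for $n\geq3$ the per-mode bound is proved by the weighted Reilly estimate rather than an ODE comparison; and the method genuinely fails at $n=3$, $m=1$, which is why even the paper confirms the conjecture only for warped products in dimensions $n=2$ or $n\geq4$.
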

The reason why Wang and Xia proposed Conjecture~\ref{conj2} may lie in the fact that they \cite{XW13} proved the nonsharp lower bound $\xi_1>nc\lambda_1/(n-1)$ in 2013 using the Reilly's formula. We remark that unlike Escobar's Conjecture~\ref{conj1}, the presence of the nonlocal term $\lambda_1$ in the lower bound \eqref{bound-conj2} may increase the difficulty to solve the problem.

In this paper we are able to confirm Conjecture~\ref{conj2} for warped product manifolds $M^n=[0,R)\times \SS^{n-1}$ of dimension $n=2$ or $n\geq 4$. In fact, we provide a lower bound for the spectrum of the Steklov problem \eqref{S2}.
\begin{thm}\label{thm2}
Let $M^n=[0,R)\times \SS^{n-1}$ be an $n$-dimensional ($n\geq 2$) smooth Riemannian manifold equipped with the warped product metric
\begin{equation*}
g=dr^2+h^2(r)g_{\SS^{n-1}},
\end{equation*}
where the warping function $h$ satisfies Assumption (A). Suppose that $M$ has nonnegative Ricci curvature and strictly convex boundary. Denote by $\xi_{(m)}$ the $m$th eigenvalue of the Steklov problem \eqref{S2} without counting multiplicity and set $\tau_m=m(n-2+m)$. Then for $n=2$ and $m\geq 1$, we have
\begin{equation}\label{bound2}
\xi_{(m)}\geq 2m^2(m+1)\frac{h'(R)}{h^3(R)}.
\end{equation}
For $n=3$ and $m\geq 2$, we have
\begin{align}\label{bound3}
\xi_{(m)}&\geq \frac{(4\tau_m-13)\tau_m}{2\tau_m-6}\frac{h'(R)}{h^3(R)}.
\end{align}
For $n\geq 4$ and $m\geq 1$, we have
\begin{equation}\label{bound4}
\xi_{(m)}\geq \frac{(4\tau_m+n(n-4))\tau_m}{2\tau_m+(n-1)(n-4)}\frac{h'(R)}{h^3(R)}.
\end{equation}
Moreover, the equality holds for $n=2$ and $m\geq 1$, or for $n\geq 4$ and $m=1$, if and only if $h(r)=r$, or $M$ is isometric to the Euclidean ball with radius $R$.
\end{thm}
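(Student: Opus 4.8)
\noindent The plan is to use the rotational symmetry to reduce the problem to a one-dimensional variational problem for each spherical-harmonic degree, and then to run Reilly's formula on the biharmonic eigenfunction so that the explicit factor $h'(R)/h^3(R)$ appears automatically.

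First I would separate variables. Writing $\varphi=f(r)Y_m(\theta)$ with $\Delta_{\SS^{n-1}}Y_m=-\tau_m Y_m$, the Laplacian acts radially through $L_m f:=f''+(n-1)\frac{h'}{h}f'-\frac{\tau_m}{h^2}f$, so that $\Delta\varphi=(L_mf)Y_m$ and $\Delta^2\varphi=0$ becomes $L_m(L_mf)=0$; the Neumann condition $\pt_\nu\varphi=0$ reads $f'(R)=0$. The spectrum of \eqref{S2} splits over the degrees $m$, and within degree $m$ the lowest eigenvalue is the infimum of $\int_M(\Delta\varphi)^2/\int_{\pt M}\varphi^2$ over radial profiles $f$ with $f'(R)=0$. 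I would check, as for the Euclidean ball, that this lowest mode-$m$ eigenvalue is exactly $\xi_{(m)}$, reducing everything to a single weighted radial Rayleigh quotient with weight $h^{n-1}dr$.

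Next I would feed the eigenfunction into Reilly's formula. Since $\pt_\nu\varphi=0$, every boundary term carrying $\pt_\nu\varphi$ drops and the identity becomes
\begin{equation*}
\int_M(\Delta\varphi)^2\,dv_g=\int_M\bigl(|\nabla^2\varphi|^2+Ric_g(\nabla\varphi,\nabla\varphi)\bigr)\,dv_g+\int_{\pt M}II(\nabla z,\nabla z)\,da_g,
\end{equation*}
where $z=\varphi|_{\pt M}$ and $II$ is the second fundamental form. The boundary $\{R\}\times\SS^{n-1}$ is umbilic with principal curvature $h'(R)/h(R)$ and is a round sphere of radius $h(R)$ on which $z=f(R)Y_m$ obeys $\int_{\pt M}|\nabla z|^2=\frac{\tau_m}{h^2(R)}\int_{\pt M}z^2$; hence the boundary integral equals the constant multiple $\frac{\tau_m h'(R)}{h^3(R)}\int_{\pt M}\varphi^2$ of the denominator. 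Dividing by $\int_{\pt M}\varphi^2$ and taking the infimum over $f$ turns the variational characterization into the exact identity
\begin{equation*}
\xi_{(m)}=\inf_{f'(R)=0}\frac{\int_M\bigl(|\nabla^2\varphi|^2+Ric_g(\nabla\varphi,\nabla\varphi)\bigr)\,dv_g}{\int_{\pt M}\varphi^2\,da_g}+\frac{\tau_m h'(R)}{h^3(R)}.
\end{equation*}
This is where I expect Reilly's formula to play its nonstandard role: it converts the fourth-order quotient into a second-order Hessian quotient plus the geometrically natural term $\tau_m h'(R)/h^3(R)$, which already carries the correct $h^3(R)$ scaling.

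It then remains to bound the Hessian quotient from below. I would compute $|\nabla^2\varphi|^2$ and $Ric_g(\nabla\varphi,\nabla\varphi)$ for the warped product, integrate over $\SS^{n-1}$, and replace the integrand by a tractable lower bound using three inputs, each tight in a controlled way: (i) the pointwise Cauchy--Schwarz (Newton) inequality $|\nabla^2_{\SS^{n-1}}Y_m|^2\ge\frac{1}{n-1}(\Delta_{\SS^{n-1}}Y_m)^2$ for the tangential Hessian, whose $L^2$ form reads $\tau_m(\tau_m-n+2)\ge\frac{\tau_m^2}{n-1}$, with equality iff $m=1$ or $n=2$; (ii) $Ric_g\ge0$, which for this metric is equivalent to concavity $h''\le0$ (together with the forced bound $h'\le1$) and lets me discard the resulting nonnegative $h''$-terms; and (iii) freezing the remaining $(h')^2$-coefficients using $h'\le1$. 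This leaves a weighted one-dimensional functional in $(f,f',f'')$ whose Euler--Lagrange equation is solved by the Euclidean biharmonic radial profiles $r^m$ and $r^{m+2}$, and minimizing it subject to $f'(R)=0$ should yield the constants in \eqref{bound2}--\eqref{bound4}.

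The main obstacle is precisely this last minimization. The quadratic form contains an indefinite cross term $-6\tau_m h^{-3}h'\,ff'$ and $(h')^2$-dependent coefficients whose sign is governed by the factor $4-n$; I would handle it by a pointwise decomposition into perfect squares plus nonnegative $h''$-contributions plus a total $r$-derivative, arranged so that after integration only an explicit boundary term at $r=R$ survives (a multiple of $f(R)^2$, since $f'(R)=0$), which then produces the constant. The feasibility and the surviving-boundary bookkeeping of this decomposition depend on $n$, and this is exactly what splits the argument into $n=2$, $n=3$, and $n\ge4$ and forces the separate, cruder estimate \eqref{bound3} for $n=3$, where the Newton inequality is far from sharp (and which degenerates unless $\tau_m>3$, i.e. $m\ge2$). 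Finally, the rigidity is immediate: equality forces simultaneously $m=1$ or $n=2$ in (i), $h''\equiv0$ in (ii), and $h'\equiv1$ in (iii), hence $h(r)=r$ and $M$ isometric to the Euclidean ball, which is the asserted equality case for $n=2,\ m\ge1$ and for $n\ge4,\ m=1$.
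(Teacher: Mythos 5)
Your setup is sound and overlaps substantially with the paper's: separation of variables, Reilly's formula \eqref{Rei} with $\partial_\nu\varphi=0$ killing the mean-curvature and $\Delta^\partial$ boundary terms, the identification $\int_{\partial M}II(\nabla^\partial\varphi,\nabla^\partial\varphi)=\tau_m h'(R)h^{-3}(R)\int_{\partial M}\varphi^2$, the retention of the $(n-2)(1-(h')^2)$ part of the Ricci integrand (exactly the paper's Remark~\ref{key-remark}), and the rigidity mechanism $h''\equiv 0$, $h'\equiv 1$. But there is a genuine gap at the step you yourself call ``the main obstacle'': you never prove the lower bound on the Hessian quotient, and the certificate you propose (pointwise perfect squares, plus nonnegative $h''$-terms, plus a total $r$-derivative whose integral produces a boundary multiple of $f(R)^2$) is nowhere constructed, while producing it with exactly the constants of \eqref{bound2}--\eqref{bound4} is the entire content of the theorem. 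Two specific failure points. First, your scheme treats all dimensions uniformly, but for $n=2$ no Reilly-type argument can yield \eqref{bound2}: specializing the $n\geq 4$ constant to $n=2$ gives only $2m^2\,h'(R)/h^3(R)$ (and the denominator $2\tau_m+(n-1)(n-4)$ vanishes at $m=1$), whereas \eqref{bound2} carries the extra factor $(m+1)$; the paper obtains it by a separate argument, explicitly solving the radial system after the substitution $s=\int_{R/2}^r h^{1-n}dt$ --- possible only for $n=2$, where the equation for $\tilde\psi$ becomes constant-coefficient --- and proving monotonicity of an explicit function $G$. Second, your identification of the lowest mode-$m$ eigenvalue with $\xi_{(m)}$ is asserted (``I would check, as for the Euclidean ball'') but on a general warped product it is a nontrivial monotonicity-in-$\tau_m$ statement, which the paper proves in Proposition~\ref{prop3} using $(h')^2\leq 1$ together with a Reilly-formula argument to justify the $H^2$ test functions.

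The missing idea is the bootstrap by which the paper avoids the trace inequality you would otherwise need. Instead of bounding $\int_M\bigl(|\nabla^2\varphi|^2+Ric_g(\nabla\varphi,\nabla\varphi)\bigr)$ directly by a multiple of the boundary integral, the paper fixes $c\in(0,1)$ and proves
\begin{equation*}
\int_{M}|\nabla^2\varphi|^2-(1-c)\int_{M}(\Delta\varphi)^2+\int_{M}Ric_g(\nabla\varphi,\nabla\varphi)\ \geq\ 0,
\end{equation*}
which after angular integration is a pointwise-in-$r$ algebraic inequality in $\psi,\psi',\psi'',h,h'$ (the inequality $K+L\geq 0$), with no integration by parts and no boundary terms; only then does it invoke the eigenvalue equation $\int_{M}(\Delta\varphi)^2=\xi_{(m)}\int_{\partial M}\varphi^2$ to convert the retained $(1-c)\int(\Delta\varphi)^2$ back into boundary data, giving $c\,\xi_{(m)}\geq\kappa\lambda_{(m)}$ and hence the stated constants from the optimal choice of $c$ (equivalently $b=1-c^{-1}$), case by case in $n$. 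In your formulation this amounts to taking the ``trace constant'' proportional to the unknown eigenvalue itself, which is precisely what makes the inequality checkable. Without it, you are left proving a sharp trace inequality from scratch; the paper's own discussion of the three-dimensional case, where a direct monotonicity/derivative argument is attempted and reported to break down for $n\geq 3$, is concrete evidence that your total-derivative decomposition is not a bookkeeping exercise. To repair the proof, replace your final minimization step by the parametrized Reilly inequality above and the substitution of the eigenvalue equation.
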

As mentioned above, as a corollary of Theorem~\ref{thm2}, Wang and Xia's Conjecture~\ref{conj2} holds for warped product manifolds of dimension $n=2$ or $n\geq 4$. Precisely, for a general warped product manifold $M^n=[0,R)\times \SS^{n-1}$ with warping function $h$, the principal curvatures of the slice $\{R\}\times \SS^{n-1}$ are equal to $h'(R)/h(R)$ and $\lambda_1(\{R\}\times \SS^{n-1})=(n-1)/h^2(R)$. So the lower bound in \eqref{bound2} or \eqref{bound4} for $m=1$ is exactly the one in \eqref{bound-conj2}.

Lastly, we are interested in another fourth-order Steklov eigenvalue problem, which was initially studied by J.~R.~Kuttler and V.~G.~Sigillito \cite{KS68} in 1968 and by L.~E.~Payne \cite{Pay70} in 1970:
\begin{equation}\label{S3}
\begin{cases}
\Delta^2 \varphi=0,\quad &\text{in }M,\\
\varphi=0,\quad \Delta \varphi=\eta  \dfrac{\pt \varphi}{\pt \nu},\quad & \text{on }\pt M.
\end{cases}
\end{equation}
Here the constant $\eta$ stands for the eigenvalue. The eigenvalue problem \eqref{S3} has some backgrounds in the theory of elasticity and in conductivity as well. See the Introduction in \cite{FGW05} for an interesting interpretation of the boundary condition of \eqref{S3} in the theory of elasticity. Similar to the classical Steklov eigenvalue problem~\eqref{S1}, the problem~\eqref{S3} is also closely related to inverse problems in partial differential equations (see \cite{Cal80}), for in this case the set of the eigenvalues of the problem~\eqref{S3} is the same as that of the Neumann-to-Laplacian map for biharmonic equations; see e.g. \cite{Liu11} for more details. In addition, the first eigenvalue $\eta_0$ is of significance since as observed by Kuttler \cite{Kut72,Kut79} it is the sharp constant for $L^2$ a priori estimates for the Laplace equation with nonhomogeneous Dirichlet boundary conditions. See e.g. \cite{BFG09,BGM06,RS15,Liu11,Liu16,WX09} for related works. The eigenvalues of the problem \eqref{S3} constitutes a discrete and increasing sequence (counted with multiplicity):
\begin{equation*}
0<\eta_0<\eta_1\leq \eta_2\leq \cdots \nearrow +\infty.
\end{equation*}
Note that the first eigenvalue $\eta_0$ is positive and simple (see \cite[Theorem~1]{BGM06} or \cite{RS15}). We also use $\eta_{(m)}$ to denote the eigenvalues without counting multiplicity. For the $n$-dimensional Euclidean ball $B_R$ with radius $R$, we know $\eta_{(m)}=(n+2m)/R$ with multiplicity $C_{n+m-1}^{n-1}-C_{n+m-3}^{n-1}$ (see \cite[Theorem~1.3]{FGW05}). The $k$th eigenvalue $\eta_k$ of the problem~\eqref{S3} admits the variational characterization:
\begin{equation}\label{V3}
\eta_k=\inf_{\substack{ \varphi\in H^2(M),\: \varphi|_{\pt M}=0,\:\pt_\nu \varphi|_{\pt M}\neq 0\\  \int_{\pt M}\varphi \varphi_i da_g=0,\: i=0,\cdots, k-1}}\frac{\int_M(\Delta \varphi)^2dv_g}{\int_{\pt M} (\pt_\nu \varphi)^2da_g},
\end{equation}
where $\varphi_i$ is the $i$th eigenfunction.

Our argument for Theorem~\ref{thm2} allows us to prove parallel results for the eigenvalue problem~\eqref{S3}.
\begin{thm}\label{thm3}
Let $M^n=[0,R)\times \SS^{n-1}$ be an $n$-dimensional ($n\geq 2$) smooth Riemannian manifold equipped with the warped product metric
\begin{equation*}
g=dr^2+h^2(r)g_{\SS^{n-1}},
\end{equation*}
where the warping function $h$ satisfies Assumption (A). Suppose that $M$ has nonnegative Ricci curvature and strictly convex boundary. Denote by $\eta_{(m)}$ the $m$th eigenvalue of the Steklov problem \eqref{S3} without counting multiplicity and set $\tau_m=m(n-2+m)$. Then for $n=2$ and $m\geq 1$, we have
\begin{equation}\label{bound5}
\eta_{(m)}\geq 2(m+1)\frac{h'(R)}{h(R)}.
\end{equation}
For $n=3$ and $m\geq 2$, we have
\begin{align}\label{bound6}
\eta_{(m)}&\geq \frac{4\tau_m-13}{\tau_m-3}\frac{h'(R)}{h(R)}.
\end{align}
For $n\geq 4$ and $m\geq 1$, we have
\begin{equation}\label{bound7}
\eta_{(m)}\geq \frac{(4\tau_m+n(n-4))(n-1)}{2\tau_m+(n-1)(n-4)}\frac{h'(R)}{h(R)}.
\end{equation}
Moreover, the equality holds for $n=2$ and $m\geq 1$, or for $n\geq 4$ and $m=1$, if and only if $h(r)=r$, or $M$ is isometric to the Euclidean ball with radius $R$.
\end{thm}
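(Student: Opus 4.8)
The plan is to follow the strategy developed for Theorem~\ref{thm2}, adapting the boundary analysis to the Dirichlet-type condition $\varphi|_{\pt M}=0$ of \eqref{S3}. First I would separate variables. Writing an eigenfunction as $\varphi=f(r)Y_m(\theta)$, where $Y_m$ is a spherical harmonic on $\SS^{n-1}$ with $\Delta_{\SS^{n-1}}Y_m=-\tau_m Y_m$ and $\tau_m=m(n-2+m)$, one computes on the warped product that $\Delta\varphi=(L_m f)Y_m$, where
\begin{equation*}
L_m f=f''+(n-1)\frac{h'}{h}f'-\tau_m\frac{f}{h^2}.
\end{equation*}
Thus $\Delta^2\varphi=0$ becomes $L_m(L_m f)=0$, the condition $\varphi|_{\pt M}=0$ becomes $f(R)=0$, and $\Delta\varphi=\eta\,\pt_\nu\varphi$ becomes $(L_m f)(R)=\eta f'(R)$; Assumption~(A) selects the solutions that are regular at $r=0$. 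A dimension count shows that each degree $m$ yields exactly one eigenvalue, and comparison with the Euclidean ball (where $\eta_{(m)}=(n+2m)/R$) identifies it, after checking monotonicity in $m$, with $\eta_{(m)}$. Setting $u:=L_m f$, the interior equation reads $L_m u=0$, i.e. $u$ is the radial profile of a mode-$m$ harmonic function, satisfying the Sturm--Liouville equation $(h^{n-1}u')'=\tau_m h^{n-3}u$ with $u$ regular at $0$. Combining the Rayleigh identity $\int_M(\Delta\varphi)^2=\eta\int_{\pt M}(\pt_\nu\varphi)^2$ with the boundary relation $u(R)=\eta f'(R)$ then yields the clean formula
\begin{equation*}
\eta_{(m)}=\frac{u(R)^2 h^{n-1}(R)}{\int_0^R u^2 h^{n-1}\,dr}.
\end{equation*}

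Next I would invoke Reilly's formula for the eigenfunction $\varphi$. Because $\varphi$ vanishes on $\pt M$, all tangential boundary contributions drop out, and since the slice $\{R\}\times\SS^{n-1}$ is totally umbilic with mean curvature $H=(n-1)h'(R)/h(R)$, Reilly's identity collapses to
\begin{equation*}
\int_M(\Delta\varphi)^2\,dv_g=\int_M|\nabla^2\varphi|^2\,dv_g+\int_M Ric_g(\nabla\varphi,\nabla\varphi)\,dv_g+(n-1)\frac{h'(R)}{h(R)}\int_{\pt M}(\pt_\nu\varphi)^2\,da_g.
\end{equation*}
Dividing by $\int_{\pt M}(\pt_\nu\varphi)^2\,da_g$ isolates the universal contribution $(n-1)h'(R)/h(R)$ and reduces the theorem to a sharp lower bound for the quotient $Q:=[\int_M|\nabla^2\varphi|^2\,dv_g+\int_M Ric_g(\nabla\varphi,\nabla\varphi)\,dv_g]/\int_{\pt M}(\pt_\nu\varphi)^2\,da_g$. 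I would reduce $Q$ to a ratio of one-dimensional integrals by computing the warped-product Hessian of $f(r)Y_m$ explicitly, using the Bochner identity on the sphere, $\int_{\SS^{n-1}}|\nabla^2 Y_m|^2=\tau_m(\tau_m-n+2)\int_{\SS^{n-1}}Y_m^2$, to handle the tangential Hessian.

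The heart of the proof, and the step I expect to be hardest, is then a sharp one-dimensional inequality: to bound the reduced integral below by $C_{n,m}\,\frac{h'(R)}{h(R)}(f'(R))^2 h^{n-1}(R)$ with the optimal constant, equivalently to bound $\int_0^R u^2 h^{n-1}\,dr$ above by a suitable multiple of $u(R)^2 h^{n-1}(R)\,h(R)/h'(R)$. Since neither $h$ nor the profile $u$ is explicit, I would extract this by multiplying $(h^{n-1}u')'=\tau_m h^{n-3}u$ by judiciously chosen weights (for instance $u$ and $h^2u$) and integrating by parts, feeding in the structural hypotheses: $h''\leq0$ and $0<h'\leq1$ coming from $Ric_g\geq0$ together with Assumption~(A), the strict convexity $h'(R)>0$, and the Steklov bound $u'(R)/u(R)=\sigma_{(m)}\geq m\,h'(R)/h(R)$ supplied by Theorem~\ref{thm1}. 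The precise dimension- and mode-dependent constants in \eqref{bound5}--\eqref{bound7}, as well as the range restrictions (notably $n=3,\ m\geq2$ and the positivity of the denominators $\tau_m-3$ and $2\tau_m+(n-1)(n-4)$), should emerge exactly as the conditions under which the resulting quadratic form in the boundary data and interior integrals is nonnegative.

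Finally, for the equality cases ($n=2$ with $m\geq1$, or $n\geq4$ with $m=1$, where the constant matches the Euclidean ball), I would trace equality back through the chain of inequalities. Equality requires the discarded curvature contribution $\int_M Ric_g(\nabla\varphi,\nabla\varphi)\,dv_g$ to vanish and saturates the one-dimensional inequality; both force $h''\equiv0$, and with $h(0)=0$, $h'(0)=1$ from Assumption~(A) this gives $h(r)=r$, whence $M$ is isometric to the Euclidean ball of radius $R$. For the complementary ranges the constant is strictly smaller than the ball value (as one checks directly, e.g. for $n\geq4$, $m=2$), consistent with the absence of an equality statement there.
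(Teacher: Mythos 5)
Your overall skeleton (separation of variables, Reilly's formula to extract the boundary term, reduction to a one-dimensional estimate) matches the paper's, and your formula $\eta_{(m)}=u(R)^2h^{n-1}(R)\big/\int_0^R u^2h^{n-1}\,dr$ is correct. But the heart of the theorem --- producing the specific constants in \eqref{bound5}--\eqref{bound7} --- is exactly the step you leave as a plan (``judiciously chosen weights \dots should emerge''), and your plan diverges from the mechanism that actually makes the proof work. After applying Reilly with $\varphi|_{\pt M}=0$ you split \emph{additively}, $\eta_{(m)}=(n-1)\kappa+Q$ with $\kappa=h'(R)/h(R)$, and then must bound $Q$ (equivalently, bound $\int_0^R u^2h^{n-1}\,dr$ above) by a direct ODE argument for the second-order profile $u$. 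The paper does not do this, and for good reason: such a direct one-dimensional attack succeeds only when $u$ is explicit, i.e.\ for $n=2$, where $u=e^{ms(r)}$ and the required bound is precisely Lemma~\ref{lem-G}; for $n\geq 3$ the paper's discussion of the $3$-dimensional case shows the analogous direct estimate breaks down (it seems hard to show the corresponding $G'(s)\geq 0$). Instead, for $n\geq 3$ the paper keeps the \emph{multiplicative} structure: it inserts a parameter $c\in(0,1)$, uses $\int_M(\Delta\varphi)^2=\eta_{(m)}\int_{\pt M}(\pt_\nu\varphi)^2$ to write $c\,\eta_{(m)}-(n-1)\kappa$ as the interior quantity $\int|\nabla^2\varphi|^2-(1-c)\int(\Delta\varphi)^2+\int Ric_g(\nabla\varphi,\nabla\varphi)$, reduces this via the Appendix to a quadratic form in $(\psi'',\psi',\psi)$ for the fourth-order radial profile $\psi$ (not $u$), eliminates $\psi''$ by Cauchy--Schwarz, and --- crucially --- retains the positive piece $\tau_m(n-2)(1-(h')^2)\psi^2/h^4$ of the Ricci integrand rather than discarding it (Remark~\ref{key-remark}; without it the sharp case $n\geq 4$, $m=1$ is unreachable). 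Optimizing over $c$ is what produces the constants $(4\tau_m-13)/(\tau_m-3)$ and $(4\tau_m+n(n-4))(n-1)/(2\tau_m+(n-1)(n-4))$, and it yields $\eta_{(m)}\geq(n-1)\kappa/c$, a multiplicative gain that the additive split cannot see. Your proposal never engages with either ingredient (the parameter $c$ or the retained Ricci piece), so the assertion that weighted integrations by parts ``should'' reproduce these constants is unsupported; as it stands this is a genuine gap, not a finished alternative proof.

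A secondary gap: identifying the mode-$m$ eigenvalue with $\eta_{(m)}$ requires showing the energies are increasing in the degree $m$, which in the paper (Proposition~\ref{prop4}, via the argument of Proposition~\ref{prop3}) uses $(h')^2\leq 1$, i.e.\ $Ric_g\geq 0$; indeed the paper notes this ordering is lost when $Ric_g\leq 0$, so your ``after checking monotonicity in $m$'' glosses over a step that is not routine. Also, Theorem~\ref{thm1} (which you want to feed in as $u'(R)/u(R)\geq m\kappa$) is available, but the paper never needs it in the proof of Theorem~\ref{thm3}, and importing it does not by itself supply the missing estimate.
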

We remark that our proof also works for $n\geq 2$ and $m=0$; the conclusion simply reads  $\eta_{(0)}=\eta_0\geq nh'(R)/h(R)$ with rigidity statement. However, since the result for $m=0$, i.e., for the first eigenvalue $\eta_0$ has been proved in \cite{WX09} for a general setting, we choose not to state it in Theorem~\ref{thm3}; see \cite{RS15} for an improvement of \cite{WX09}. In addition, we should point out that the lower bounds in \eqref{bound6} and \eqref{bound7} are interesting only for small $m$.

The proofs of Theorem~\ref{thm1}, Theorem~\ref{thm2} and Theorem~\ref{thm3} mainly consist of two steps. In Step~1 we obtain the characterization of all the eigenfunctions in the problem by separation of variables. Thus all the eigenfunctions are of the simple form $\varphi(r,p)=\psi(r)\omega(p)$, $r\in [0,R)$ and $p\in \SS^{n-1}$, where $\psi(r)$ satisfies certain ODE and $\omega(p)$ is some spherical harmonic on $\SS^{n-1}$. In Step~2, for Theorem~\ref{thm1} in all dimensions, or Theorems~\ref{thm2} and \ref{thm3} in dimension $n=2$, we can directly analyze the resulting ODE to conclude the proof; while for Theorems~\ref{thm2} and \ref{thm3} in dimension $n\geq 3$, we need to make best use of the Reilly's formula \cite{Rei77} to finish the proof.

For the proofs involving the Reilly's formula, we find a new and interesting feature on the use of this formula. More precisely, instead of throwing away the Ricci integral term in the Reilly's formula (as done in most of the literature), we need separate a nontrivial positive term from it to balance the negative term. See Remark~\ref{key-remark} in Section~\ref{sec4.3}. This kind of process seems impossible for general Riemannian manifolds, which may indicate that Wang and Xia's Conjecture~\ref{conj2} in its full generality (at least for $n\geq 3$) would be much difficult.

The structure of this paper is as follows. In Section~\ref{sec2} we collect some basic facts on the warped product manifolds, recall the Reilly's formula which will be used later, and review the representation of spherical harmonics in terms of the harmonic homogeneous polynomials. In Sections~\ref{sec3}, \ref{sec4} and \ref{sec5} we prove Theorems~\ref{thm1}, \ref{thm2} and \ref{thm3}, respectively. At the end of Section~\ref{sec4}, we also discuss briefly the remaining case $n=3$ and $m=1$, and the case $Ric_g\leq 0$ for Theorem~\ref{thm2}. In the Appendix we provide some computation results. For the notation in the remaining part of this paper, sometimes we write $B_R$ for the warped product manifold $M=[0,R)\times \SS^{n-1}$ and we omit the integral element $dr$. And as far as a spherical harmonic $\omega$ is concerned, we assume that it is normalized, i.e., $\int_{\SS^{n-1}}\omega^2 da=1$.

\section{Preliminaries}\label{sec2}
\subsection{Ricci curvature and the principal curvatures on the boundary}

Let $M^n=[0,R)\times \SS^{n-1}$ be an $n$-dimensional ($n\geq 2$) smooth Riemannian manifold equipped with the warped product metric
\begin{equation*}
g=dr^2+h^2(r)g_{\SS^{n-1}},
\end{equation*}
where the warping function $h$ satisfies
\begin{itemize}
  \item[(A)] $h\in C^\infty([0,R))$, $h(r)>0$ for $r\in (0,R)$, $h'(0)=1$ and $h^{(2k)}(0)=0$ for all integers $k\geq 0$.
\end{itemize}

The Ricci curvature of the warped product manifold $M$ reads
\begin{align*}
Ric_g&=-\left(\frac{h''}{h}-(n-2)\frac{1-(h')^2}{h^2}\right)g-(n-2)\left(\frac{h''}{h}+\frac{1-(h')^2}{h^2}\right)dr^2.
\end{align*}
In fact, denoting by $\{\pt_r,E_1,\dots,E_{n-1}\}$ an orthonormal basis of $T_{(r,p)}M$ at $(r,p)$ ($r>0$), we have the sectional curvatures of $M$ given by (see \cite{Pet16})
\begin{align*}
K(\pt_r \wedge E_i)&=-\frac{h''(r)}{h(r)},\quad i=1,\dots,n-1,\\
K(E_i\wedge E_j)&=\frac{1-(h'(r))^2}{h^2(r)},\quad i\neq j.
\end{align*}
When $n=2$, Ricci curvature reduces to the Gaussian curvature or the sectional curvature.

On the other hand, it is well-known that the boundary $\pt M=\{R\}\times \SS^{n-1}$  of $M$ is totally umbilical with principal curvatures
\begin{equation*}
\kappa_1=\cdots=\kappa_{n-1}=\frac{h'(R)}{h(R)}.
\end{equation*}

For the proofs of Theorems~\ref{thm1}, \ref{thm2} and \ref{thm3}, we need the following lemma concerning the property of the warping factor.
\begin{lem}
Under the conditions of Theorems~\ref{thm1}, \ref{thm2} or \ref{thm3}, we have
\begin{equation}\label{eq2.3}
h''(r)\leq 0, \text{ and } 0<h'(r)\leq 1,\quad r\in [0,R).
\end{equation}
\end{lem}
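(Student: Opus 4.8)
The plan is to read off the sign of $h''$ directly from the radial Ricci curvature, and then to recover both bounds on $h'$ from the resulting monotonicity together with the normalization in Assumption~(A) and the convexity of the boundary.

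First I would evaluate the Ricci tensor in the radial direction. Feeding $\pt_r$ into the displayed formula for $Ric_g$ (using $g(\pt_r,\pt_r)=dr^2(\pt_r,\pt_r)=1$), or equivalently summing the $n-1$ radial sectional curvatures $K(\pt_r\wedge E_i)=-h''/h$, gives
\begin{equation*}
Ric_g(\pt_r,\pt_r)=-(n-1)\frac{h''(r)}{h(r)},\quad r\in(0,R).
\end{equation*}
Since $Ric_g\geq 0$ and $h>0$ on $(0,R)$, this forces $h''(r)\leq 0$ there; at $r=0$ Assumption~(A) gives $h''(0)=0$, so $h''\leq 0$ on all of $[0,R)$.

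Next, $h''\leq 0$ means $h'$ is non-increasing on $[0,R)$. Combined with the normalization $h'(0)=1$ from Assumption~(A), this immediately yields $h'(r)\leq h'(0)=1$. For the strict positivity I would invoke the convexity hypothesis: the boundary slice $\{R\}\times\SS^{n-1}$ is totally umbilical with principal curvatures $h'(R)/h(R)$, so strict convexity means $h'(R)/h(R)>0$, and since $h(R)>0$ we get $h'(R)>0$. Because $h'$ is non-increasing, $h'(r)\geq h'(R)>0$ for every $r\in[0,R)$, completing the estimate $0<h'\leq 1$.

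The only delicate point is the behaviour at the centre $r=0$, where the spherical frame $\{E_1,\dots,E_{n-1}\}$ degenerates and the expression $h''/h$ is a priori of the form $0/0$. I expect this to be the sole obstacle, and it is handled cleanly by Assumption~(A): the conditions $h'(0)=1$ and $h^{(2k)}(0)=0$ are exactly what guarantee that the metric $g$ is smooth at the origin, so all curvature quantities extend continuously to $r=0$ and the inequality $h''\leq 0$ persists up to (and, with $h''(0)=0$, including) the centre.
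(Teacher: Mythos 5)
Your proof is correct and takes essentially the same approach as the paper: nonnegative Ricci curvature in the radial direction forces $h''\leq 0$, and then the monotonicity of $h'$ combined with $h'(0)=1$ (Assumption (A)) and $h'(R)>0$ (strict convexity of the boundary) gives $0<h'\leq 1$. The only cosmetic difference is that the paper records both eigenvalues of $Ric_g$ (radial and spherical) and notes the equivalence of $Ric_g\geq 0$ with a pair of inequalities before using only $h''\leq 0$, whereas you extract $h''\leq 0$ directly from $Ric_g(\pt_r,\pt_r)=-(n-1)h''/h\geq 0$.
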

\begin{proof}
Note that the eigenvalues of $Ric_g$ are
\begin{equation*}
-(n-1)\frac{h''}{h}, -\left(\frac{h''}{h}-(n-2)\frac{1-(h')^2}{h^2}\right),\cdots,-\left(\frac{h''}{h}-(n-2)\frac{1-(h')^2}{h^2}\right),
\end{equation*}
and so $Ric_g\geq 0$ is equivalent to
\begin{equation*}
h''\leq 0,\text{ and } h''\leq (n-2)\frac{1-(h')^2}{h}.
\end{equation*}
Meanwhile, notice that $h'(0)=1$, and $h'(R)>0$ by the strict convexity of the boundary. Combining $h''(r)\leq 0$, we know that $0<h'(r)\leq 1$ for $r\in [0,R)$.
\end{proof}

For the proof of Theorem~\ref{thm1.1}, the inequalities in \eqref{eq2.3} are reversed.

\subsection{Reilly's formula}
For an $n$-dimensional connected compact smooth Riemannian manifold $(M^n,g)$ with boundary and any smooth function $f\in C^{\infty}(M)$, we have the Reilly's formula (\cite{Rei77}):
\begin{align}
&\int_{M}\left((\Delta f)^2-|\nabla^2 f|^2-Ric_g(\nabla f,\nabla f)\right)dv_g\nonumber \\
&=\int_{\pt M}\left((H\pt_\nu f+2\Delta^\pt u)\pt_\nu f+II(\nabla^\pt u,\nabla^\pt u)\right)da_g.\label{Rei}
\end{align}
Here $u=f|_{\pt M}$, the symbols $\Delta^\pt$ and $\nabla^\pt$ are the Laplace--Beltrami operator and the connection on the boundary with respect to the induced metric, respectively. Moreover, $II$ and $H=tr_gII$ denote the second fundamental form and the mean curvature of the boundary with respect to the outer unit normal $\nu$, respectively. The proof of the Reilly's formula is by integrating the following Bochner's formula on $M$,
\begin{equation}\label{Boc}
\frac{1}{2}\Delta(|\nabla f|^2)=|\nabla^2 f|^2+g(\nabla f,\nabla(\Delta f))+\mathrm{Ric}_{g}(\nabla f,\nabla f),
\end{equation}
using divergence theorem to get some boundary integral terms, and arranging suitably these terms to obtain \eqref{Rei}.

\subsection{Spherical harmonics}

Given a spherical harmonic $\omega$ on $\SS^{n-1}$ of degree $m\geq 0$, it can be viewed as the restriction on $\SS^{n-1}$ of a harmonic homogeneous polynomial $\tilde{\omega}$ on $\R^n$ of the same degree $m$. For each $m\geq 0$, let $\mathcal{D}_m$ denote the space of harmonic homogeneous polynomials on $\R^n$ of degree $m$ and $\mu_m$ be the dimension of $\mathcal{D}_m$. For example, we know
\begin{align*}
&\mathcal{D}_0=span\{1\},\quad \mu_0=1,\\
&\mathcal{D}_1=span\{x_i,\:i=1,\cdots,n\},\quad \mu_1=n,\\
&\mathcal{D}_2=span\{x_ix_j,\: x_1^2-x_k^2,\:1\leq i<j\leq n,\: 2\leq k\leq n\},\quad \mu_2=\frac{n^2+n-2}{2},
\end{align*}
and $\mu_m=C_{n+m-1}^{n-1}-C_{n+m-3}^{n-1}$ for $m\geq 2$. See \cite{ABR92} for basic facts concerning $\mathcal{D}_m$ and $\mu_m$.

For a spherical harmonic $\omega$ on $\SS^{n-1}$ of degree $m\geq 0$, one of its basic properties is that $-\Delta_{\SS^{n-1}}\omega=\tau_m\omega$ with $\tau_m=m(n-2+m)$.

\section{Proofs of Theorems~\ref{thm1} and \ref{thm1.1}}\label{sec3}
\subsection{The characterization of the Steklov eigenfunctions}
The Steklov eigenvalue problem we consider in this section is
\begin{equation}\label{S3.1}
\begin{cases}
\Delta \varphi=0,&\text{ in } M,\\
\dfrac{\partial \varphi}{\partial \nu}=\sigma \varphi, &\text{ on } \pt M,
\end{cases}
\end{equation}
and the variational characterization of its eigenvalues reads
\begin{equation}\label{V3.2}
\sigma_k=\inf_{\substack{ \varphi\in H^1(M),\: \varphi|_{\pt M}\neq 0\\  \int_{\pt M}\varphi \varphi_i da_g=0,\: i=0,\cdots, k-1}}\frac{\int_M|\nabla \varphi|^2dv_g}{\int_{\pt M} \varphi^2da_g},
\end{equation}
where $\varphi_i$ is the $i$th eigenfunction.

First we obtain the characterization of all its eigenfunctions on warped product manifolds in Theorem~\ref{thm1} by separation of variables. The following result for the first nontrivial eigenfunction was proved in \cite[Lemma~3]{Esc00}. Here we follow the approach in \cite{Esc00}. For completeness, we include the proof here.
\begin{prop}\label{prop1}
For the warped product manifold $M$ in Theorem~\ref{thm1}, any nontrivial eigenfunction $\varphi$ of the problem \eqref{S3.1} can be written as $\varphi(r,p)=\psi(r)\omega(p)$, where $\omega$ is a spherical harmonic on $\SS^{n-1}$ of some degree $m\geq 1$, i.e.,
\begin{equation*}
-\Delta_{\SS^{n-1}}\omega=\tau_m\omega \text{ on }\SS^{n-1},\quad \tau_m=m(n-2+m),
\end{equation*}
and $\psi$ is a nontrivial solution of the ODE
\begin{equation*}
\begin{cases}
\dfrac{1}{h^{n-1}}\dfrac{d}{dr}(h^{n-1}\dfrac{d}{dr}\psi)-\dfrac{\tau_m\psi}{h^2}=0,\quad &r\in (0,R),\\
\psi(0)=0.&
\end{cases}
\end{equation*}
For any nontrivial solution $\psi$ to the above ODE, the $m$th eigenvalue $\sigma_{(m)}$ without counting multiplicity is given by $\sigma_{(m)}=\psi'(R)/\psi(R)$.
\end{prop}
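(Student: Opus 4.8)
The plan is to separate variables using the warped-product structure, so I begin by recording the two analytic facts that drive the argument. For any $f=f(r,p)$ on $M=[0,R)\times\SS^{n-1}$ the Laplace--Beltrami operator of $g=dr^2+h^2(r)g_{\SS^{n-1}}$ splits as
\begin{equation*}
\Delta_g f=\frac{1}{h^{n-1}}\frac{\pt}{\pt r}\Bigl(h^{n-1}\frac{\pt f}{\pt r}\Bigr)+\frac{1}{h^2}\Delta_{\SS^{n-1}}f,
\end{equation*}
and the outward unit normal along $\pt M=\{R\}\times\SS^{n-1}$ is $\nu=\pt_r$, so the Steklov condition in \eqref{S3.1} reads $\pt_r\varphi=\sigma\varphi$ at $r=R$. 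Since the degree-$m$ spherical harmonics $\{\omega_{m,j}\}$ form a complete orthonormal basis of $L^2(\SS^{n-1})$, I would expand, for each fixed $r\in(0,R)$, $\varphi(r,\cdot)=\sum_{m\geq 0}\sum_j\psi_{m,j}(r)\omega_{m,j}$. Inserting this into $\Delta_g\varphi=0$ and using $-\Delta_{\SS^{n-1}}\omega_{m,j}=\tau_m\omega_{m,j}$ together with the orthogonality of the $\omega_{m,j}$, each coefficient $\psi=\psi_{m,j}$ must satisfy the stated radial ODE
\begin{equation*}
\frac{1}{h^{n-1}}\frac{d}{dr}\Bigl(h^{n-1}\frac{d\psi}{dr}\Bigr)-\frac{\tau_m}{h^2}\psi=0,\qquad r\in(0,R).
\end{equation*}

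Next I would pin down the admissible behaviour at the cone point $r=0$. By Assumption (A) we have $h(r)=r+O(r^3)$ near the origin, so the ODE has a regular singular point there; its indicial equation factors as $s(s+n-2)=\tau_m$, with roots $s=m$ and $s=-(n-2+m)$. The solution space is thus spanned by one profile behaving like $r^m$ and one like $r^{-(n-2+m)}$ (with the customary logarithmic modification in the resonant cases). Requiring $\varphi$ to extend smoothly across the origin, equivalently to lie in $H^1(M)$, eliminates the singular branch, and for $m\geq 1$ this is precisely the boundary condition $\psi(0)=0$; for $m=0$ only the constant survives and yields the trivial eigenfunction with eigenvalue $\sigma_0=0$. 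Hence for each $m\geq 1$ there is, up to a multiplicative constant, a unique admissible radial profile $\psi_m$, and every degree-$m$ mode has the product form $\psi_m(r)\omega(p)$ with $\omega\in\mathcal{D}_m|_{\SS^{n-1}}$.

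Assembling these facts, completeness and regularity give $\varphi(r,p)=\sum_{m\geq 1}\psi_m(r)\Omega_m(p)$ with each $\Omega_m$ a degree-$m$ spherical harmonic, after absorbing the constants from each mode into $\Omega_m$. Projecting the boundary relation $\pt_r\varphi=\sigma\varphi$ at $r=R$ onto the mutually orthogonal eigenspaces of $\Delta_{\SS^{n-1}}$ yields, for every $m$, either $\Omega_m=0$ or $\sigma=\psi_m'(R)/\psi_m(R)$. To make sense of this ratio and see it is positive I would argue directly from $(h^{n-1}\psi_m')'=\tau_m h^{n-3}\psi_m$: normalising $\psi_m>0$ near $0$, the right-hand side is positive wherever $\psi_m>0$, so $h^{n-1}\psi_m'$ increases from its vanishing value at the origin and stays positive; hence $\psi_m'>0$ and $\psi_m>0$ on all of $(0,R]$, and in particular $\psi_m(R)\neq 0$ with $\psi_m'(R)/\psi_m(R)>0$.

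The separation and orthogonality bookkeeping are routine; two points demand genuine care. The first is the singular-point analysis at $r=0$, where I must justify that smooth (equivalently $H^1$) extension really forces the $r^{-(n-2+m)}$ branch to drop out, leaving exactly $\psi(0)=0$. The second, which is what upgrades the expansion $\sum_m\psi_m\Omega_m$ to a single pure product and simultaneously identifies $\psi_m'(R)/\psi_m(R)$ with the $m$th distinct eigenvalue $\sigma_{(m)}$, is the strict monotonicity of $m\mapsto\psi_m'(R)/\psi_m(R)$. I would obtain this by a Riccati comparison: setting $w_m=\psi_m'/\psi_m$, the equation becomes
\begin{equation*}
w_m'=\frac{\tau_m}{h^2}-(n-1)\frac{h'}{h}w_m-w_m^2,
\end{equation*}
whose right-hand side is strictly increasing in $\tau_m$; since $w_m\sim m/r$ near the origin is already ordered in $m$, the comparison principle for first-order ODEs propagates the ordering to $r=R$ and gives $0=\sigma_{(0)}<\sigma_{(1)}<\sigma_{(2)}<\cdots$. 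Once the ratios are distinct, the boundary relation above forces a single $m$ to contribute, so every nontrivial eigenfunction is the pure product $\psi_m(r)\omega(p)$ claimed, with eigenvalue $\sigma_{(m)}=\psi_m'(R)/\psi_m(R)$.
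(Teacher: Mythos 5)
Your proposal is correct, and it reaches the two nontrivial conclusions of the proposition — the pure product form of eigenfunctions and the identification $\sigma_{(m)}=\psi_m'(R)/\psi_m(R)$ with the correct ordering — by a genuinely different mechanism than the paper. The paper, after the same separation-of-variables setup (it constructs the product harmonic functions $\psi_k\omega_k$, shows they span all harmonic functions via uniqueness of the Dirichlet problem, and expands an arbitrary eigenfunction in this basis), orders the modes through the variational characterization \eqref{V3.2}: for boundary-normalized radial profiles the Dirichlet energy $\int_0^R\bigl((\psi_k')^2+\tau_{m(k)}\psi_k^2/h^2\bigr)h^{n-1}\,dr$ is strictly increasing in $\tau_{m}$, so the Rayleigh quotient of a mixed mode is a weighted average of the individual quotients and the eigenspaces are pinned down inductively. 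You instead prove strict monotonicity of the boundary logarithmic derivatives directly: writing $w_m=\psi_m'/\psi_m$, the Riccati equation $w_m'=\tau_m/h^2-(n-1)(h'/h)w_m-w_m^2$ has right-hand side strictly increasing in $\tau_m$, the asymptotics $w_m\sim m/r$ give the initial ordering near the cone point, and a first-touching-point comparison propagates $w_{m_1}>w_{m_2}$ (for $m_1>m_2$) to $r=R$; projecting the Steklov condition onto the eigenspaces of $\Delta_{\mathbb{S}^{n-1}}$ then forces a single degree to survive in any eigenfunction. Your route is more self-contained ODE analysis — it never invokes the min-max principle and it yields the strict inequalities $\sigma_{(0)}<\sigma_{(1)}<\sigma_{(2)}<\cdots$ explicitly, which the paper leaves implicit — while the paper's variational route handles the bookkeeping of eigenspaces and multiplicities with less pointwise analysis (your argument needs the preliminary positivity of $\psi_m$ on $(0,R]$ so that $w_m$ is finite, which you correctly establish first, and the standard justification that the spherical-harmonic coefficients of an eigenfunction are smooth in $r$ and solve the radial ODE, which you gloss but which is routine). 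Both proofs also rely on the same regular-singular-point analysis at $r=0$ to discard the $r^{-(n-2+m)}$ branch, so on that point you agree with the paper in substance if not in presentation.
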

\begin{proof}
We use separation of variables. Note that the space $L^2(B_R)$ is equivalent to the space $L^2((0,R))\otimes L^2(\SS^{n-1})$. Take $\{\omega_k\}$, $k=0,1,\cdots$, as a complete orthonormal basis of $L^2(\SS^{n-1})$ which is a set of spherical harmonics on $\SS^{n-1}$. That is,
\begin{equation*}
-\Delta_{\SS^{n-1}}\omega_k=\tau_{m(k)}\omega_k,\quad \tau_m=m(n-2+m).
\end{equation*}
We arrange $\omega_k$ such that $\omega_0$ is of degree $m(0)=0$; $\{\omega_k\}_{k=1}^n$ are of degree $m(k)=1$; etc.

Let $\psi_0=1$. For $k\geq 1$, let $\psi_k\neq 0$ solve
\begin{equation*}
\begin{cases}
\dfrac{1}{h^{n-1}}\dfrac{d}{dr}(h^{n-1}\dfrac{d}{dr}\psi_k)-\dfrac{\tau_{m(k)}\psi_k}{h^2}=0,\quad &r\in (0,R),\\
\psi_k(0)=0,\quad \psi_k(R)=1.&
\end{cases}
\end{equation*}
Here $\psi_k(0)=0$ is needed because the function $\psi_k(r)\omega_k(p)$ below is supposed to be continuous at the origin, and the condition $\psi_k(R)=1$ is imposed to specify the solution. Now define $\varphi_k(r,p)=\psi_k(r)\omega_k(p)$, $k\in \N$. We claim that
\begin{equation*}
{span} \{\varphi_k,k\in \N\}=\{\varphi\in C^\infty(B_R):\Delta\varphi=0 \text{ in } B_R\}.
\end{equation*}
To prove the claim, note that for $k\geq 0$ we have
\begin{equation*}
\Delta \varphi_k=\left(\frac{1}{h^{n-1}}\frac{d}{dr}(h^{n-1}\frac{d}{dr}\psi_k)-\frac{\tau_{m(k)}\psi_k}{h^2}\right)\omega_k=0.
\end{equation*}

Now take any smooth $\bar{\varphi}$ with $\Delta\bar{\varphi}=0$ in $B_R$. Since the basis $\{\omega_k\}$, $k=0,1,\dots$, for $L^2(\SS^{n-1})$ is complete, we can first decompose
\begin{equation*}
\bar\varphi|_{\pt B_R}=\sum_{k=0}^\infty c_k \omega_k,
\end{equation*}
for a sequence of constants $c_k$. Then the function $f:=\bar{\varphi}-\sum_{k=0}^\infty c_k \varphi_k$ satisfies
\begin{equation*}
\Delta f=0, \text{ in }B_R, \quad f=0,\text{ on }\pt B_R,
\end{equation*}
which implies $f=0$, or $\bar{\varphi}=\sum_{k=0}^\infty c_k \varphi_k$. So we have proved the claim. In particular, any eigenfunction $\varphi$ can be written as
\begin{equation*}
\varphi=\sum_{k=0}^\infty c_k \varphi_k.
\end{equation*}
The eigenfunction corresponding to $\sigma_0$ is $\varphi_0=const\neq 0$. Let $\varphi$ be an eigenfunction corresponding to $\sigma_1>0$. Since $\int_{\pt B_R} \varphi da_g=0$, we know $c_0=0$. Using the variational characterization for $\sigma_1$, we have
\begin{align*}
\sigma_1&=\frac{\sum_{k\geq 1}c_k^2\int_{B_R}|\nabla \varphi_k|^2 dv_g}{\sum_{k\geq 1}c_k^2\int_{\pt B_R}\varphi_k^2 da_g}\\
&=\frac{\sum_{k\geq 1}c_k^2\int_{0}^R \left((\psi_k')^2+\tau_{m(k)}\frac{\psi_k^2}{h^2}\right)h^{n-1}dr}{\sum_{k\geq 1}c_k^2\int_{\pt B_R}\varphi_k^2 da_g}.
\end{align*}
Since the numerator on the right-hand side is strictly increasing in $\tau_{m}$, we  conclude that the first nonzero eigenvalue $\sigma_{(1)}=\sigma_1$ is of multiplicity $n$ and its corresponding eigenspace is spanned by $\{\psi_k\omega_k,k=1,\cdots,n\}$. Note that all the spherical harmonics $\{\omega_k,k=1,\cdots,n\}$ are of the same degree $m(k)=1$, and so $\psi_1=\cdots=\psi_n$.

Once we determine the eigenspace corresponding to $\sigma_{(1)}$, we can use the variational characterization \eqref{V3.2} to determine all the subsequent eigenspaces.

\end{proof}

\subsection{Proof of Theorem~\ref{thm1}}

We only consider the case $m\geq 1$. By Proposition~\ref{prop1}, the $m$th Steklov eigenvalue $\sigma_{(m)}$ without counting multiplicity is given by
\begin{equation*}
\sigma_{(m)}=\frac{\psi'(R)}{\psi(R)},
\end{equation*}
where $\psi$ solves
\begin{equation*}
\begin{cases}
\dfrac{1}{h^{n-1}}\dfrac{d}{dr}(h^{n-1}\dfrac{d}{dr}\psi)-\dfrac{\tau_m\psi}{h^2}=0,\quad & r\in (0,R),\\
\psi(0)=0.&
\end{cases}
\end{equation*}
First note that we may carry out the integration to get
\begin{equation*}
h^{n-1}(r)\psi'(r)=\tau_m\int_0^r h^{n-3}(t)\psi(t)dt.
\end{equation*}
Without loss of generality assume $\psi(r)>0$ in a small neighbourhood of the origin. Then we have $\psi'(r)>0$ and $\psi(r)> 0$ for $r\in (0,R)$. In fact, since $h(r)=r+o(r)$ as $r\rightarrow 0+$, we have
\begin{equation}\label{Asym}
\psi(r)=r^m+o(r^m),\text{ as } r\rightarrow 0+,
\end{equation}
up to a constant multiple. Here the other solution $r^{2-n-m}$ in the asymptotic sense, which is singular at $r=0$, has been ruled out.

Now Theorem~\ref{thm1} follows from the following proposition.
\begin{prop}\label{prop2}
Under the conditions of Theorem~\ref{thm1}, we have
\begin{equation*}
\sigma_{(m)}=\frac{\psi'(R)}{\psi(R)}\geq m\frac{h'(R)}{h(R)}.
\end{equation*}
\end{prop}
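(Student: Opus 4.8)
The plan is to recast the inequality as a sign statement for the comparison quantity
\[
W(r):=h(r)\psi'(r)-m\,h'(r)\psi(r),
\]
and then to prove $W(R)\ge 0$ by a first-order differential-inequality argument. Since $\psi(R)>0$ and $h(R)>0$, the desired bound $\psi'(R)/\psi(R)\ge m\,h'(R)/h(R)$ is exactly equivalent to $W(R)\ge0$. The motivation is that $W=h^{m+1}\,(\psi/h^{m})'$, so $W$ measures the deviation of $\psi$ from the model solution $h^{m}$; in the Euclidean case $h(r)=r$, $\psi(r)=r^{m}$ one has $W\equiv0$, which is the equality case.

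First I would differentiate $W$ and use the equation for $\psi$ to eliminate the second derivative. Writing the ODE as $(h^{n-1}\psi')'=\tau_m h^{n-3}\psi$ and dividing by $h^{n-2}$ gives $h\psi''=\tfrac{\tau_m}{h}\psi-(n-1)h'\psi'$. Substituting into $W'=h'\psi'+h\psi''-mh''\psi-mh'\psi'$ yields
\[
W'=(2-n-m)h'\psi'+\frac{\tau_m}{h}\psi-mh''\psi .
\]
Next I would eliminate $\psi'$ in favour of $W$ via $h'\psi'=\tfrac{h'}{h}W+m(h')^2\tfrac{\psi}{h}$. After collecting the terms proportional to $\psi/h$ and using $\tau_m=m(n+m-2)$, the coefficient of $\psi/h$ collapses (since $2-n-m=-(n+m-2)$) to $m(n+m-2)\bigl(1-(h')^2\bigr)$, giving
\[
W'=-(n+m-2)\frac{h'}{h}\,W+\frac{m(n+m-2)}{h}\bigl(1-(h')^2\bigr)\psi-mh''\psi .
\]
The key point is that under $Ric_g\ge0$ and strict convexity the Lemma supplies $h''\le0$ and $0<h'\le1$ on $[0,R)$; together with $\psi>0$, $h>0$ and $n+m-2\ge1$, the last two terms are nonnegative. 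Hence $W'+(n+m-2)\tfrac{h'}{h}W\ge0$, and multiplying by the integrating factor $h^{n+m-2}>0$ gives $(h^{n+m-2}W)'\ge0$, so $h^{n+m-2}W$ is nondecreasing on $(0,R)$. From the asymptotics $\psi(r)=r^{m}+o(r^{m})$ in \eqref{Asym} together with $h(r)=r+o(r)$ from Assumption~(A), one checks $W(r)=o(r^{m})$, so $h^{n+m-2}(r)W(r)\to0$ as $r\to0^{+}$. Monotonicity then forces $h^{n+m-2}W\ge0$, i.e.\ $W\ge0$ on $(0,R)$, and in particular $W(R)\ge0$, which is the claim.

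I expect the main obstacle to be twofold. The algebraic one is recognizing that the a priori messy combination obtained after inserting the ODE simplifies to the single factor $1-(h')^2$ (geometrically, the term carrying the spherical sectional curvature $K(E_i\wedge E_j)$): this is precisely what renders the sign of $W'$ transparent, and it is the step where the curvature hypothesis enters. The analytic one is controlling $W$ at the origin, where $h,\psi$ both vanish and the coefficient $h'/h$ is singular; one must differentiate the asymptotic expansion of $\psi$ (legitimate since $\psi$ is a smooth solution of a regular-singular ODE) to guarantee $h^{n+m-2}W\to0$ and thereby anchor the monotonicity. Finally, the rigidity in Theorem~\ref{thm1} follows by inspecting the equality case: $W(R)=0$ forces $h^{n+m-2}W\equiv0$, hence $W\equiv W'\equiv0$, so both nonnegative terms above vanish; since $\psi>0$ this gives $h'\equiv1$ and $h''\equiv0$, and with $h(0)=0$ we conclude $h(r)=r$.
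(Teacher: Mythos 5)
Your proof is correct, and it uses the same comparison quantity as the paper --- the paper's proof also starts from $q(r)=h\psi'-mh'\psi$ --- but your execution is genuinely more streamlined. The paper first discards the term $-mh''\psi$ to get $q'\geq u$ with the auxiliary function $u=h\psi''+(1-m)h'\psi'=\tau_m\psi/h-\frac{\tau_m}{m}h'\psi'$ (which is \emph{not} a multiple of $q$), and then runs a second differential-inequality/Gronwall argument on $u$, again via the ODE, to get $u\geq 0$, $u(0)=0$; only then does $q'\geq u\geq 0$ and $q(0)=0$ give $q(R)\geq 0$. You instead close the loop in one step: substituting the ODE and eliminating $\psi'$ in favour of $W$ makes the coefficient of $\psi/h$ collapse to $m(n+m-2)\bigl(1-(h')^2\bigr)$, yielding the single linear inequality $W'+(n+m-2)\frac{h'}{h}W\geq 0$, which the integrating factor $h^{n+m-2}$ converts into monotonicity of $h^{n+m-2}W$, anchored at $0$ by the asymptotics (in fact $W(0)=0$ by continuity alone, so even the differentiated expansion is more care than strictly needed). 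What your route buys: no intermediate function, only one sign argument, no need for $\psi'\geq 0$ (the paper uses it to discard $-\frac{1}{m}h''\psi'$), and a cleaner rigidity discussion since the two curvature terms $\bigl(1-(h')^2\bigr)\psi$ and $-h''\psi$ are kept explicit rather than thrown away at different stages; both arguments ultimately hinge on the same geometric input, the factor $1-(h')^2$ coming from the spherical sectional curvature together with $h''\leq 0$.
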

\begin{proof}
Let
\begin{equation*}
q(r):=h(r)\psi'(r)-mh'(r)\psi(r).
\end{equation*}
So $q(0)=0$. Next we have
\begin{align*}
q'&=h'\psi'+h\psi''-mh''\psi-mh'\psi'\\
&\geq h\psi''+(1-m)h'\psi'\\
&=:u(r),
\end{align*}
where we have used $h''\leq 0$ and $\psi\geq 0$.

Using the equation satisfied by $\psi$, we have
\begin{equation*}
u=h\psi''+(1-m)h'\psi'=\tau_m\frac{\psi}{h}-\frac{\tau_m}{m}h'\psi'.
\end{equation*}
Note that
\begin{equation*}
u(0)=\tau_m(1-\frac{1}{m})\psi'(0).
\end{equation*}
So in view of \eqref{Asym}, for either $m=1$ or $m\geq 2$ we have $u(0)=0$.

Next we deduce
\begin{align*}
\frac{1}{\tau_m}u'&=\frac{\psi'}{h}-\frac{\psi h'}{h^2}-\frac{1}{m}h''\psi'-\frac{1}{m}h'\psi''\\
&\geq \frac{\psi'}{h}-\frac{\psi h'}{h^2}-\frac{h'}{mh}\left(\tau_m\frac{\psi}{h}-(n-1)h'\psi'\right)\\
&=\left(\frac{1}{h}+(n-1)\frac{(h')^2}{mh}\right)\psi'-(1+\frac{\tau_m}{m})\frac{h'\psi}{h^2}\\
&=\left(\frac{1}{h}+(n-1)\frac{(h')^2}{mh}\right)\frac{m}{\tau_m h'}(\tau_m\frac{\psi}{h}-u)-(1+\frac{\tau_m}{m})\frac{h'\psi}{h^2}\\
&=\frac{m}{h^2h'}(1-(h')^2)\psi-\left(\frac{1}{h}+(n-1)\frac{(h')^2}{mh}\right)\frac{m}{\tau_m h'}u\\
&\geq -\left(\frac{1}{h}+(n-1)\frac{(h')^2}{mh}\right)\frac{m}{\tau_m h'}u.
\end{align*}
It follows that $u(r)\geq 0$, and consequently $q(r)\geq 0$. In particular, $q(R)\geq 0$, as required.

\end{proof}

\subsection{Proof of Theorem~\ref{thm1.1}}

The difference between Theorem~\ref{thm1} and Theorem~\ref{thm1.1} does not affect Proposition~\ref{prop1}. So to prove Theorem~\ref{thm1.1}, we only need to reverse all the inequalities in Proposition~\ref{prop2} by using $h''\geq 0$ and $h'\geq 1$ instead of $h''\leq 0$ and $0<h'\leq 1$.



\section{Proof of Theorem~\ref{thm2}}\label{sec4}

The fourth-order Steklov eigenvalue problem we consider in this section is:
\begin{equation}\label{S4.1}
\begin{cases}
\Delta^2 \varphi=0,\quad &\text{in }M,\\
\dfrac{\pt \varphi}{\pt \nu}=0,\quad \dfrac{\pt(\Delta \varphi)}{\pt \nu}+\xi \varphi=0,\quad & \text{on }\pt M.
\end{cases}
\end{equation}
Its $k$th eigenvalue $\xi_k$ has the variational characterization:
\begin{equation}\label{V4.2}
\xi_k=\inf_{\substack{ \varphi\in H^2(M),\:\pt_\nu \varphi|_{\pt M}=0,\: \varphi|_{\pt M}\neq 0\\  \int_{\pt M}\varphi \varphi_i da_g=0,\: i=0,\cdots, k-1}}\frac{\int_M(\Delta \varphi)^2dv_g}{\int_{\pt M} \varphi^2da_g},
\end{equation}
where $\varphi_i$ is the $i$th eigenfunction.

\subsection{The characterization of the Steklov eigenfunctions}
Similar to Proposition~\ref{prop1}, we may characterize the eigenfunctions of the problem~\eqref{S4.1} as follows.
\begin{prop}\label{prop3}
For the warped product manifold $M$ in Theorem~\ref{thm2}, any nontrivial eigenfunction $\varphi$ of the problem \eqref{S4.1} can be written as $\varphi(r,p)=\psi(r)\omega(p)$, where $\omega$ is a spherical harmonic on $\SS^{n-1}$ of some degree $m\geq 1$, i.e.,
\begin{equation*}
-\Delta_{\SS^{n-1}}\omega=\tau_m\omega \text{ on }\SS^{n-1},\quad \tau_m=m(n-2+m),
\end{equation*}
and $\psi$ is a nontrivial solution of the ODE
\begin{equation}\label{eq4}
\begin{cases}
\dfrac{1}{h^{n-1}}\dfrac{d}{dr}(h^{n-1}\dfrac{d}{dr}\psi)-\dfrac{\tau_{m}\psi}{h^2}=\tilde{\psi},&\\
\dfrac{1}{h^{n-1}}\dfrac{d}{dr}(h^{n-1}\dfrac{d}{dr}\tilde{\psi})-\dfrac{\tau_m\tilde\psi}{h^2}=0,&\\
\psi(0)=0,\quad  \psi'(R)=0,\quad \tilde{\psi}(0)=0.&
\end{cases}
\end{equation}
For any nontrivial solution $\psi$ to the above ODE, the $m$th eigenvalue $\xi_{(m)}$ without counting multiplicity is given by $\xi_{(m)}=-\tilde{\psi}'(R)/\psi(R)$.
\end{prop}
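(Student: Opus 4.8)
The plan is to follow the separation-of-variables template already established in the proof of Proposition~\ref{prop1}, with the one genuinely new ingredient being that the biharmonic operator must be unwound in two stages. Throughout I write $L_m$ for the radial operator
\begin{equation*}
L_m\psi:=\f{1}{h^{n-1}}\f{d}{dr}\left(h^{n-1}\f{d}{dr}\psi\right)-\f{\tau_m\psi}{h^2},
\end{equation*}
so that the computation recorded in Proposition~\ref{prop1} reads $\Delta(\psi(r)\omega(p))=(L_m\psi)(r)\,\omega(p)$ whenever $\omega$ is a spherical harmonic of degree $m$. Fixing a complete orthonormal basis $\{\omega_k\}$ of $L^2(\SS^{n-1})$ consisting of spherical harmonics, with $\omega_k$ of degree $m(k)$, I expand an arbitrary eigenfunction as $\varphi=\sum_k\psi_k(r)\omega_k(p)$.

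First I would reduce $\Delta^2\varphi=0$ to the radial system. Applying the displayed formula twice gives $\Delta^2\varphi=\sum_k (L_{m(k)}^2\psi_k)\,\omega_k$, so orthogonality of the $\omega_k$ forces $L_{m(k)}^2\psi_k=0$ for every $k$. Setting $\td\psi_k:=L_{m(k)}\psi_k$ (equivalently, $\td\psi_k$ is the radial profile of the harmonic function $\Delta\varphi$) then yields exactly the two second-order equations of \eqref{eq4}. The boundary data translate directly: smoothness of $\varphi$ at the origin rules out the singular indicial solutions (those behaving like $r^{2-n-m}$ near $r=0$, in the sense of \eqref{Asym}) and forces $\psi_k(0)=\td\psi_k(0)=0$; the Neumann condition $\pt_\nu\varphi=0$ on $\pt M$, where $\nu=\pt_r$, gives $\psi_k'(R)=0$; and the eigenvalue condition $\pt_\nu(\Delta\varphi)+\xi\varphi=0$ becomes $\td\psi_k'(R)+\xi\psi_k(R)=0$, i.e. $\xi=-\td\psi'(R)/\psi(R)$.

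Next I would organize these separated solutions into the eigenspaces, exactly as in Proposition~\ref{prop1}. For each degree $m\geq1$ the fourth-order problem $L_m^2\psi=0$ together with the two regularity conditions at $r=0$ and the condition $\psi'(R)=0$ leaves a one-dimensional space of radial profiles, producing the candidate eigenfunctions $\psi_k\omega_k$. A completeness argument—decomposing the Dirichlet trace of any admissible biharmonic function in the basis $\{\omega_k\}$, subtracting the corresponding separated solutions (normalized so that $\psi_k'(R)=0$ and $\psi_k(R)=1$), and noting that the difference is biharmonic with vanishing Dirichlet and Neumann traces and hence identically zero—shows these span the full eigenfunction space. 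Feeding a general combination into the Rayleigh quotient \eqref{V4.2} and using that the quotient associated with a separated solution is monotone in $\tau_m$, one concludes that each eigenfunction is a single product $\psi(r)\omega(p)$ with $\omega$ of one fixed degree $m$, and that the $m$th distinct eigenvalue $\xi_{(m)}$ is realized precisely by degree-$m$ spherical harmonics, with value $-\td\psi'(R)/\psi(R)$.

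The main obstacle I anticipate is not the algebraic reduction, which is routine, but the ordering step: verifying that the Rayleigh quotient $\int_M(\Delta\varphi)^2/\int_{\pt M}\varphi^2$ of a separated solution is genuinely monotone in the degree parameter $\tau_m$. Because $\psi$ now solves a fourth-order problem, its dependence on $\tau_m$ is far less transparent than in the harmonic case of Proposition~\ref{prop1}, so some care is needed to confirm that larger $\tau_m$ yields a larger quotient and hence that the eigenvalues are correctly indexed by degree. Establishing the existence and uniqueness (up to scale) of the regular radial profile for each $m$, via the indicial analysis at $r=0$, is the secondary technical point that must be pinned down.
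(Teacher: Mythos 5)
Your reduction of $\Delta^2\varphi=0$ to the radial system, the translation of the boundary conditions into \eqref{eq4}, and the completeness argument (subtracting separated solutions and invoking uniqueness for the biharmonic problem with vanishing Dirichlet and Neumann traces) all match the paper's proof. But there is a genuine gap exactly where you flag your ``main obstacle'': the ordering step is not a technical point to be smoothed over later --- it is the entire content of the proposition beyond routine separation of variables, and your proposal contains no mechanism for it. Worse, the formulation of monotonicity you pose --- how the Rayleigh quotient varies as $\tau_m$ varies, where the radial profile $\psi$ itself solves a $\tau_m$-dependent fourth-order problem --- is the intractable version, precisely because (as you note) the dependence of $\psi$ on $\tau_m$ is opaque in the biharmonic case.

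The paper circumvents this with a different comparison: it \emph{fixes the radial profile and varies only the degree of the spherical harmonic}. Suppose $\beta_k:=-\td\psi_k'(R)/\psi_k(R)$ equals $\xi_1$ but $\tau_{m(k)}>\tau_1$. Keeping the same radial function $\psi_k$, one compares
\begin{equation*}
A=\int_0^R\bigl(L_{m(k)}\psi_k\bigr)^2h^{n-1}\,dr
\quad\text{and}\quad
B=\int_0^R\bigl(L_{1}\psi_k\bigr)^2h^{n-1}\,dr,
\end{equation*}
i.e.\ the numerators obtained by pairing the \emph{same} $\psi_k$ with harmonics of degree $m(k)$ and of degree $1$. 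Proving the strict inequality $A>B$ is an integration-by-parts computation on $[\e,R]$ followed by $\e\to0+$, and it is here --- and only here --- that the geometric hypotheses enter, through $(h')^2\leq 1$ from \eqref{eq2.3}. Then $\psi_k\omega_1$ serves as a test function in the variational characterization \eqref{V4.2} of $\xi_1$, yielding $\beta_k>B/\bigl(\psi_k^2(R)h^{n-1}(R)\bigr)\geq\xi_1=\beta_k$, a contradiction. One further point your outline omits entirely: for $\psi_k\omega_1$ to be admissible in \eqref{V4.2} one must check that it lies in $H^2(B_R)$, and the paper does this by applying Reilly's formula \eqref{Rei} on the annulus $B_R\setminus B_\e$, showing the inner boundary terms have nonnegative limit as $\e\to0+$, and using $Ric_g\geq0$ and $II>0$ to bound $\int|\nabla^2 f|^2$ by $\int(\Delta f)^2$. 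Without both the fixed-profile comparison and this admissibility check, the indexing of $\xi_{(m)}$ by degree --- and hence the formula $\xi_{(m)}=-\td\psi'(R)/\psi(R)$ --- remains unproven.
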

\begin{proof}
Take $\{\omega_k\}$, $k=0,1,\cdots$, to be a complete orthonormal basis of $L^2(\SS^{n-1})$ as in Proposition~\ref{prop1}. Let $\psi_0=1$. For $k\geq 1$, let $\psi_k$ and $\tilde{\psi}_k$ solve
\begin{equation}\label{eq5}
\begin{cases}
\dfrac{1}{h^{n-1}}\dfrac{d}{dr}(h^{n-1}\dfrac{d}{dr}\psi_k)-\dfrac{\tau_{m(k)}\psi_k}{h^2}=\tilde{\psi}_k,&\\
\dfrac{1}{h^{n-1}}\dfrac{d}{dr}(h^{n-1}\dfrac{d}{dr}\tilde{\psi}_k)-\dfrac{\tau_{m(k)}\tilde\psi_k}{h^2}=0,&\\
\psi_k(0)=0,\quad \psi_k(R)=1,\quad  \psi_k'(R)=0,\quad \tilde{\psi}_k(0)=0.&
\end{cases}
\end{equation}
Now define $\varphi_k(r,p)=\psi_k(r)\omega_k(p)$, $k\in \N$. We claim that
\begin{equation*}
{span} \{\varphi_k,k\in \N\}=\{\varphi\in C^\infty(B_R):\Delta^2\varphi=0 \text{ in } B_R,\: \partial_\nu \varphi=0 \text{ on } \pt B_R\}.
\end{equation*}
To prove the claim, note that for $k\geq 0$ we have
\begin{equation*}
\Delta \varphi_k=\left(\frac{1}{h^{n-1}}\dfrac{d}{dr}(h^{n-1}\dfrac{d}{dr}\psi_k)-\frac{\tau_{m(k)}\psi_k}{h^2}\right)\omega_k=\tilde{\psi}_k \omega_k,
\end{equation*}
and
\begin{equation*}
\Delta^2 \varphi_k=\left(\frac{1}{h^{n-1}}\dfrac{d}{dr}(h^{n-1}\dfrac{d}{dr}\tilde{\psi}_k)-\frac{\tau_{m(k)}\tilde\psi_k}{h^2}\right)\omega_k=0,
\end{equation*}
with $\pt_\nu \varphi_k=0$ on $\pt B_R$.

Now take any smooth $\bar{\varphi}$ with $\Delta^2\bar{\varphi}=0$ in $B_R$ and $\pt_\nu \bar{\varphi}=0$ on $\pt B_R$. Since the basis $\{\omega_k\}$, $k=0,1,\dots$, for $L^2(\SS^{n-1})$ is complete, we can first decompose
\begin{equation*}
\bar\varphi|_{\pt B_R}=\sum_{k=0}^\infty c_k \omega_k,
\end{equation*}
for a sequence of constants $c_k$. Then the function $f:=\bar{\varphi}-\sum_{k=0}^\infty c_k \varphi_k$ satisfies
\begin{equation*}
\Delta^2 f=0, \text{ in }B_R,\quad f=0 \text{ and } \pt_\nu f=0, \text{ on }\pt B_R,
\end{equation*}
which leads to $f=0$, or $\bar{\varphi}=\sum_{k=0}^\infty c_k \varphi_k$. So we have proved the claim. In particular, any eigenfunction $\varphi$ can be written as
\begin{equation*}
\varphi=\sum_{k=0}^\infty c_k \varphi_k.
\end{equation*}
It is easy to see that the first eigenvalue is $\xi_0=0$, corresponding to the constant eigenfunction $\varphi_0=const\neq 0$. Next we consider the eigenfunction $\varphi$ corresponding to $\xi_1=\xi_{(1)}$. Since $\int_{\pt B_R}\varphi da_g=0$, we have $c_0=0$.

For $k\geq 1$ define the energy
\begin{align*}
\beta_k:=-\frac{\tilde{\psi}_k'(R)}{\psi_k(R)}=\frac{\int_{B_R}(\Delta \varphi_k)^2 dv_g}{\int_{\pt B_R}\varphi_k^2 da_g}.
\end{align*}
It is easy to see $\beta_k\geq \xi_1$, for $ k\geq 1$.

On the other hand, we have
\begin{align*}
\xi_1&=\frac{\int_{B_R}(\Delta \varphi)^2 dv_g}{\int_{\pt B_R}\varphi^2 da_g}=\frac{\sum_{k\geq 1}c_k^2\int_{B_R}(\Delta \varphi_k)^2 dv_g}{\sum_{k\geq 1}c_k^2\int_{\pt B_R}\varphi_k^2 da_g}\\
&=\frac{\sum_{k\geq 1}c_k^2\int_{\pt B_R}\varphi_k^2 da_g\times \beta_k}{\sum_{k\geq 1}c_k^2\int_{\pt B_R}\varphi_k^2 da_g}\geq \xi_1.
\end{align*}
Therefore, for any $k\geq 1$ with $c_k\neq 0$, we have $\beta_k=\xi_1$. Now fix any $k\geq 1$ such that $\beta_k=\xi_1$. We claim that the corresponding $\tau_{m(k)}$ is equal to $\tau_1=n-1$. Then the eigenspace corresponding to $\xi_{(1)}$ can be determined as $span\{\varphi_k,k=1,\cdots,n\}$, and by the same argument the subsequent eigenspaces can also be dealt with.

To prove the claim, towards a contradiction assume $\tau_{m(k)}>\tau_1$. Note that
\begin{align*}
\beta_k=\frac{\int_0^R\left(\dfrac{1}{h^{n-1}}\dfrac{d}{dr}(h^{n-1}\dfrac{d}{dr}\psi_k)-\dfrac{\tau_{m(k)}\psi_k}{h^2}\right)^2h^{n-1}(r)dr}{\psi_k^2(R)h^{n-1}(R)}.
\end{align*}
First we prove that the numerator on the right-hand side of the above formula is strictly increasing in $\tau_{m(k)}$, or
\begin{align*}
&A:=\int_0^R\left(\frac{1}{h^{n-1}}\dfrac{d}{dr}(h^{n-1}\dfrac{d}{dr}\psi_k)-\frac{\tau_{m(k)}\psi_k}{h^2}\right)^2h^{n-1}dr\\
&>\int_0^R\left(\frac{1}{h^{n-1}}\dfrac{d}{dr}(h^{n-1}\dfrac{d}{dr}\psi_k)-\frac{\tau_1\psi_k}{h^2}\right)^2h^{n-1}dr=:B,
\end{align*}
and in particular $B$ would be finite. In fact, fixing any small $\varepsilon>0$ and setting
\begin{align*}
A(\varepsilon)&:=\int_\varepsilon^R\left(\frac{1}{h^{n-1}}\dfrac{d}{dr}(h^{n-1}\dfrac{d}{dr}\psi_k)-\frac{\tau_{m(k)}\psi_k}{h^2}\right)^2h^{n-1}dr,\\
B(\varepsilon)&:=\int_\varepsilon^R\left(\frac{1}{h^{n-1}}\dfrac{d}{dr}(h^{n-1}\dfrac{d}{dr}\psi_k)-\frac{\tau_1\psi_k}{h^2}\right)^2h^{n-1}dr,
\end{align*}
we obtain
\begin{align*}
&A(\varepsilon)-B(\varepsilon)\\
&=(\tau_{m(k)}-\tau_1)\int_\varepsilon^R\frac{\psi_k}{h^2}\left(-\frac{2}{h^{n-1}}\dfrac{d}{dr}(h^{n-1}\dfrac{d}{dr}\psi_k)+\frac{(\tau_{m(k)}+\tau_1)\psi_k}{h^2}\right)h^{n-1}dr\\
&=(\tau_{m(k)}-\tau_1)\left(2\int_\varepsilon^R\left(-\frac{\psi_k}{h^2}\dfrac{d}{dr}(h^{n-1}\dfrac{d}{dr}\psi_k)\right)dr+(\tau_{m(k)}+\tau_1)\int_\varepsilon^R (\psi_k)^2h^{n-5}dr\right).
\end{align*}
For the first term on the right-hand side, we have
\begin{align*}
&\int_\varepsilon^R\left(-\frac{\psi_k}{h^2}\dfrac{d}{dr}(h^{n-1}\dfrac{d}{dr}\psi_k)\right)dr\\
&=-\psi_kh^{n-3}\psi_k'\big|_{\varepsilon}^R+\int_\varepsilon^R\left((\frac{\psi_k}{h^2})'\times h^{n-1}\psi_k'\right)dr\\
&=\psi_kh^{n-3}\psi_k'\big|_{r=\varepsilon}+ \int_\varepsilon^R\left(h^{n-3}\left(\psi_k'-\frac{\psi_k}{h}h'\right)^2-h^{n-5}(\psi_k)^2(h')^2\right)dr\\
&\geq \psi_kh^{n-3}\psi_k'\big|_{r=\varepsilon}- \int_\varepsilon^Rh^{n-5}(\psi_k)^2dr,
\end{align*}
where we have used $(h')^2\leq 1$. Then we have
\begin{align*}
&A(\varepsilon)-B(\varepsilon)\geq (\tau_{m(k)}-\tau_1)\left(2\psi_kh^{n-3}\psi_k'\big|_{r=\varepsilon}+(\tau_{m(k)}+\tau_1-2) \int_\varepsilon^Rh^{n-5}(\psi_k)^2dr\right)\\
&\geq (\tau_{m(k)}-\tau_1)\left(2\psi_kh^{n-3}\psi_k'\big|_{r=\varepsilon}+(\tau_{m(k)}+\tau_1-2) \int_{R/2}^Rh^{n-5}(\psi_k)^2dr\right),
\end{align*}
as long as $\varepsilon\leq R/2$. Thus
\begin{align*}
&A-B=\lim_{\varepsilon\rightarrow 0+}(A(\varepsilon)-B(\varepsilon))\\
&\geq (\tau_{m(k)}-\tau_1)\left(2(\psi_k')^2(0)h^{n-2}(0)+(\tau_{m(k)}+\tau_1-2) \int_{R/2}^Rh^{n-5}(\psi_k)^2dr\right)>0.
\end{align*}
Here if $n=2$, we understand that $h^{n-2}(0)=1$.

 Then we can deduce that
\begin{align*}
\beta_k&>\frac{\int_0^R\left(\dfrac{1}{h^{n-1}}\dfrac{d}{dr}(h^{n-1}\dfrac{d}{dr}\psi_k)-\dfrac{\tau_1\psi_k}{h^2}\right)^2h^{n-1}(r)dr}{\psi_k^2(R)h^{n-1}(R)}\\
&=\frac{\int_{B_R}(\Delta (\psi_k\omega_1))^2 dv_g}{\int_{\pt B_R}(\psi_k\omega_1)^2 da_g}\geq \xi_1=\beta_k,
\end{align*}
which is a contradiction. Here we have used the function $f=\psi_k\omega_1$ as a test function in the variational characterization of $\xi_1$. To make it work, we need to verify that $f\in H^2(B_R)$, which can be proved by use of the Reilly's formula. Applying the Reilly's formula \eqref{Rei} to $f$ over $B_R\setminus B_\varepsilon$ for small $\varepsilon>0$, we get
\begin{align*}
&\int_{B_R\setminus B_\varepsilon}\left((\Delta f)^2-|\nabla^2 f|^2-Ric_g(\nabla f,\nabla f)\right)dv_g\\
&=\int_{\pt B_R\cup \pt B_\varepsilon}\left((H\pt_\nu f+2\Delta^\pt f)\pt_\nu f+II(\nabla^\pt f,\nabla^\pt f)\right)da_g.
\end{align*}
Note that when on $\pt B_\varepsilon$ we have $\nu=-\pt_r$. Then we may check that
\begin{align*}
&\lim_{\varepsilon\rightarrow 0+}\int_{\pt B_\varepsilon}\left((H\pt_\nu f+2\Delta^\pt f)\pt_\nu f+II(\nabla^\pt f,\nabla^\pt f)\right)da_g\\
&=\lim_{\varepsilon\rightarrow 0+}\left(\left(-(n-1)\frac{h'}{h}(-\psi_k')-2\psi_k\frac{\tau_1}{h^2}\right)(-\psi_k')-\frac{h'}{h}\frac{\tau_1\psi_k^2}{h^2}\right)h^{n-1}\bigg|_{r=\varepsilon}\\
&=\lim_{\varepsilon\rightarrow 0+}h^{n-2}\left(2\tau_1\frac{\psi_k\psi_k'}{h}-(n-1)h'(\psi_k')^2-\tau_1\frac{h'\psi_k^2}{h^2}\right)\\
&=h^{n-2}(0)\left(2\tau_1\frac{(\psi_k')^2(0)}{h'(0)}-(n-1)h'(0)(\psi_k'(0))^2-\tau_1\frac{(\psi_k'(0))^2}{h'(0)}\right)\\
&\geq 0.
\end{align*}
Then combining $Ric_g\geq 0$ in $B_R$ and $II>0$ along $\pt B_R$, we can conclude that
\begin{equation*}
\int_{B_R}|\nabla^2 f|^2 dv_g\leq \int_{B_R}(\Delta f)^2 dv_g<+\infty.
\end{equation*}

So we have the claim and we can determine the eigenspace corresponding to $\xi_{(1)}$. Next we can use the variational characterization \eqref{V4.2} for higher-order eigenvalues to determine the subsequent eigenspaces and finish the proof.

\end{proof}

By Proposition~\ref{prop3}, it suffices to prove that
\begin{equation*}
\xi_{(m)}=-\tilde{\psi}'(R)/{\psi(R)}=\dfrac{\int_{B_R}(\Delta (\psi \omega))^2 dv_g}{\int_{\pt B_R} (\psi \omega)^2 da_g}
\end{equation*}
has the lower bound in each case of Theorem~\ref{thm2}, where $\psi$ and $\tilde{\psi}$ solve \eqref{eq4} and $\omega$ is a spherical harmonic of degree $m$.

\subsection{Proof of Theorem~\ref{thm2} for $n=2$}\label{sec4.2}

First for general $n$ it is natural to use the change of variable
\begin{equation*}
s(r):=\int_{R/2}^r\frac{dt}{h^{n-1}(t)}.
\end{equation*}
So $s$ maps $(0,R]$ onto $(-\infty,s_0]$ for some $s_0>0$ and we obtain
\begin{align*}
s'(r)&=\frac{1}{h^{n-1}(r)},\quad  r'(s)=h^{n-1}(r(s)),\\
&\frac{d}{ds}=\frac{dr}{ds}\frac{d}{dr}=h^{n-1}(r)\frac{d}{dr}.
\end{align*}
Let $\tilde{b}(s):=\tilde{\psi}(r(s))$. Then the second equation of \eqref{eq4} reads
\begin{align*}
\tau_m h^{2(n-2)}&(r(s))\tilde b(s)=\tau_m h^{2(n-2)}(r)\tilde \psi(r)\\
&=h^{n-1}(r)\frac{d}{dr}\left(h^{n-1}(r)\frac{d}{dr}\tilde \psi(r)\right)=\frac{d^2}{ds^2}\tilde b(s).
\end{align*}

Now consider $n=2$. We have $\tilde{b}''(s)=m^2\tilde{b}(s)$, which implies that
\begin{equation*}
\tilde b(s)=c_1e^{ms}+c_2e^{-ms}.
\end{equation*}
It follows that
\begin{equation*}
\tilde{\psi}(r)=c_1e^{ms(r)}+c_2e^{-ms(r)}.
\end{equation*}
Note that as $r\rightarrow 0+$, we have $s(r)\rightarrow -\infty$. Therefore $\tilde{\psi}(0)=0$ implies $c_2=0$. Without loss of generality assume $c_1=1$, meaning
\begin{equation*}
\tilde\psi(r)=e^{ms(r)}.
\end{equation*}
Then the first equation of \eqref{eq4} becomes
\begin{equation*}
e^{ms(r)}h^2(r)=h^2(r)\psi''(r)+h(r)h'(r)\psi'(r)-m^2\psi(r).
\end{equation*}
Equivalently, using the $s$-variable and letting $b(s):=\psi(r(s))$, we have
\begin{equation}\label{eq3}
e^{ms}h^2(r(s))=b''(s)-m^2b(s),
\end{equation}
with the boundary conditions
\begin{equation*}
b(-\infty)=0,\quad b'(s_0)=0.
\end{equation*}

The general solution of Equation~\eqref{eq3} is
\begin{equation*}
b(s)=c_3e^{ms}+c_4e^{-ms}+\frac{1}{2m}e^{ms}\int_{-\infty}^s h^2(r(t))dt-\frac{1}{2m}e^{-ms}\int_{-\infty}^se^{2mt}h^2(r(t))dt.
\end{equation*}
The condition $b(-\infty)=0$ implies $c_4=0$. Then the condition $b'(s_0)=0$ forces $c_3$ to be
\begin{equation*}
c_3(s_0)=-\frac{1}{2m}\int_{-\infty}^{s_0} h^2(r(t))dt-\frac{1}{2m}e^{-2ms_0}\int_{-\infty}^{s_0}e^{2mt}h^2(r(t))dt.
\end{equation*}

Therefore, we have
\begin{align*}
b(s)=&\frac{1}{2m}e^{ms}\int_{s_0}^s h^2(r(t))dt-\frac{1}{2m}e^{m(s-2s_0)}\int_{-\infty}^{s_0}e^{2mt}h^2(r(t))dt\\
&-\frac{1}{2m}e^{-ms}\int_{-\infty}^se^{2mt}h^2(r(t))dt.
\end{align*}
In particular,
\begin{equation*}
b(s_0)=-\frac{1}{m}e^{-ms_0}\int_{-\infty}^{s_0}e^{2mt}h^2(r(t))dt.
\end{equation*}
Since $\tilde{\psi}'(r)=me^{ms(r)}s'(r)=me^{ms(r)}/h(r)$, to prove Theorem~\ref{thm2} for $n=2$ it suffices to prove
\begin{equation*}
-\frac{me^{ms(R)}}{\psi(R)}\geq 2m^2(m+1)\frac{h'(R)}{h^2(R)},\text{ or } -\frac{e^{ms_0}}{b(s_0)}\geq 2m(m+1)\frac{h'(r(s_0))}{h^2(r(s_0))}.
\end{equation*}
Note $b(s_0)<0$. Then we need to check that the function
\begin{equation*}
G(s):=\frac{e^{2ms}h^2(r(s))}{h'(r(s))}-2(m+1)\int_{-\infty}^{s}e^{2mt}h^2(r(t))dt
\end{equation*}
satisfies $G(s_0)\geq 0$. In fact, we can show the following result.
\begin{lem}\label{lem-G}
We have $G(s)\geq 0$ for $s\in (-\infty,s_0]$.
\end{lem}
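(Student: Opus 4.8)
The plan is to establish two facts: that $G(-\infty)=0$, and that $G'(s)\geq 0$ throughout $(-\infty,s_0]$. Together these give $G(s)=\int_{-\infty}^{s}G'(t)\,dt\geq 0$, which is the assertion. The whole computation is anchored on the change-of-variable relation for $n=2$, namely $r'(s)=h^{n-1}(r(s))=h(r(s))$, so that $\frac{d}{ds}=h\frac{d}{dr}$ acting on functions of $r$.

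First I would differentiate $G$ directly. Writing $h,h'$ for $h(r(s)),h'(r(s))$ and using $\frac{d}{ds}\big(\frac{h^2}{h'}\big)=h\frac{d}{dr}\big(\frac{h^2}{h'}\big)=2h^2-\frac{h^3h''}{(h')^2}$, a term-by-term differentiation of the two pieces of $G$ yields
\begin{equation*}
G'(s)=e^{2ms}\left[\frac{2m\,h^2}{h'}+2h^2-\frac{h^3h''}{(h')^2}-2(m+1)h^2\right]
=e^{2ms}\left[2m\,h^2\,\frac{1-h'}{h'}-\frac{h^3h''}{(h')^2}\right].
\end{equation*}
By \eqref{eq2.3} we have $0<h'\leq 1$ and $h''\leq 0$ for $n=2$, so $\frac{1-h'}{h'}\geq 0$ and $-\frac{h^3h''}{(h')^2}\geq 0$; since also $m\geq 1$ and $h>0$ on $(0,R)$, both bracketed terms are nonnegative and hence $G'(s)\geq 0$. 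This is the conceptual heart of the lemma: the nonnegative-curvature hypothesis enters exactly through the two inequalities on $h'$ and $h''$, each controlling one of the two summands.

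Next I would verify the limiting value $G(-\infty)=0$. As $s\to-\infty$ one has $r(s)\to 0+$, and from $h(r)=r+o(r)$ the relation $s(r)=\int_{R/2}^{r}\frac{dt}{h(t)}$ gives $r(s)\sim Ce^{s}$, so that $h^2(r(s))\sim C^2 e^{2s}$. Consequently $e^{2ms}h^2(r(s))/h'\to 0$ (the first term of $G$), while the integrand $e^{2mt}h^2(r(t))\sim C^2 e^{2(m+1)t}$ is integrable near $-\infty$, so $\int_{-\infty}^{s}e^{2mt}h^2\,dt\to 0$ as well. Hence $G(-\infty)=0$, and combined with $G'\geq 0$ the monotonicity forces $G(s)\geq 0$ on all of $(-\infty,s_0]$.

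The only delicate point is this boundary behavior at $r=0$: one must confirm that the improper integral defining $G$ actually converges and that the limit is zero, which rests on the asymptotics $r(s)\sim Ce^{s}$ extracted from Assumption~(A). This is mild, and the genuinely load-bearing step is the explicit simplification of $G'$ into a manifestly nonnegative expression; once that algebraic form is obtained, the sign follows immediately from \eqref{eq2.3}.
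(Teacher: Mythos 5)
Your proof is correct and follows essentially the same route as the paper: differentiate $G$ using $r'(s)=h(r(s))$, simplify $G'(s)$ to $e^{2ms}\bigl[2mh^2\tfrac{1-h'}{h'}-\tfrac{h^3h''}{(h')^2}\bigr]$, conclude $G'\geq 0$ from \eqref{eq2.3}, and combine with $G(-\infty)=0$. Your only addition is spelling out the asymptotics $r(s)\sim Ce^{s}$ behind the limit $G(-\infty)=0$, which the paper dismisses as easy to check.
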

\begin{proof}
First it is easy to check $G(-\infty)=0$. Next we have
\begin{align*}
G'(s)&=\frac{2me^{2ms}h^2(r(s))}{h'(r(s))}+\frac{e^{2ms}2h(r(s))h'(r(s))h(r(s))}{h'(r(s))}\\
&\quad -\frac{e^{2ms}h^3(r(s))h''(r(s))}{(h'(r(s)))^2}-2(m+1)e^{2ms}h^2(r(s))\\
&=2me^{2ms}h^2(r(s))\left(\frac{1}{h'(r(s))}-1\right)-\frac{e^{2ms}h^3(r(s))h''(r(s))}{(h'(r(s)))^2}\\
&\geq 0,
\end{align*}
where the last inequality is due to $h'(r(s))\in (0,1]$ and $h''(r(s))\leq 0$. So the conclusion follows immediately.

\end{proof}

So we have the lower bound \eqref{bound2} for $n=2$ and $m\geq 1$ in Theorem~\ref{thm2}. Moreover, when the equality holds, we must have $h''(r)\equiv 0$ and $h'(r)\equiv 1$, which means $h(r)=r$. So we complete the proof.

\subsection{Proof of Theorem~\ref{thm2} for $n\geq 3$}\label{sec4.3}

Let $\varphi(r,p)=\psi(r)\omega(p)$ be an eigenfunction corresponding to $\xi_{(m)}$. So we have
\begin{align*}
&\int_{B_R}(\Delta \varphi)^2=\xi_{(m)}\int_{\pt B_R}\varphi^2,\\
\int_{\pt B_R}II(\nabla^\pt \varphi,&\nabla^\pt \varphi)=\kappa \int_{\pt B_R}|\nabla^\pt \varphi|^2=\kappa \lambda_{(m)}\int_{\pt B_R}\varphi^2,
\end{align*}
where $\kappa=h'(R)/h(R)$ denotes the principal curvature of the boundary $\pt B_R$, and $\lambda_{(m)}=\tau_m/h^2(R)$ the $m$th eigenvalue of the Laplacian on the boundary $\pt B_R$ without counting multiplicity.

By the Reilly's formula~\eqref{Rei}, for fixed $c\in (0,1)$, we obtain
\begin{align*}
&c\xi_{(m)} \int_{\pt B_R}\varphi^2-\kappa \lambda_{(m)} \int_{\pt B_R}\varphi^2\\
&=\int_{B_R} |\nabla^2\varphi|^2-(1-c)\int_{B_R}(\Delta \varphi)^2+\int_{B_R} Ric(\nabla\varphi,\nabla\varphi).
\end{align*}
Our goal is to find $c$ as small as possible such that the right-hand side of the above formula is nonnegative.

By Proposition~\ref{prop-A} in the Appendix we have
\begin{align*}
\int_{B_R}&|\nabla^2\varphi|^2=\int_0^R \left((\psi'')^2+(n-1)\frac{(\psi')^2(h')^2}{h^2}\right.\\
&\left.+\tau\frac{2}{h^2}\left((\psi')^2+\psi^2\frac{(h')^2}{h^2}-3\frac{\psi\psi'h'}{h}\right)+\frac{\psi^2}{h^4}\tau(\tau-n+2)\right)h^{n-1},
\end{align*}
and
\begin{align*}
&\int_{B_R}(\Delta \varphi)^2=\int_0^R \left(\psi''+(n-1)\psi'\frac{h'}{h}-\tau\frac{\psi}{h^2}\right)^2h^{n-1}\\
&=\int_0^R \left((\psi'')^2+2\psi''\left((n-1)\psi'\frac{h'}{h}-\tau\frac{\psi}{h^2}\right)+\left((n-1)\psi_r\frac{h_r}{h}-\tau\frac{\psi}{h^2}\right)^2\right)h^{n-1}.
\end{align*}
Here and below we write $\tau=\tau_m$ for simplicity.

Therefore
\begin{align*}
&\int_{B_R} |\nabla^2\varphi|^2-(1-c)\int_{B_R}(\Delta \varphi)^2\\
&=\int_0^R \left(c(\psi'')^2-2(1-c)\psi''\left((n-1)\psi'\frac{h'}{h}
-\tau\frac{\psi}{h^2}\right)\right.\\
&\left.-(1-c)\left((n-1)\psi'\frac{h'}{h}-\tau\frac{\psi}{h^2}\right)^2+(n-1)\frac{(\psi')^2(h')^2}{h^2}\right.\\
&\left.+\tau\frac{2}{h^2}\left((\psi')^2+\psi^2\frac{(h')^2}{h^2}-3\frac{\psi\psi'h'}{h}\right)+\frac{\psi^2}{h^4}\tau(\tau-n+2)\right)h^{n-1}\\
&\geq \int_0^R \left(-\frac{1-c}{c}\left((n-1)\psi'\frac{h'}{h}-\tau\frac{\psi}{h^2}\right)^2+(n-1)\frac{(\psi')^2(h')^2}{h^2}\right.\\
&\left.+\tau\frac{2}{h^2}\left((\psi')^2+\psi^2\frac{(h')^2}{h^2}-3\frac{\psi\psi'h'}{h}\right)+\frac{\psi^2}{h^4}\tau(\tau-n+2)\right)h^{n-1}\\
&=:\int_0^R Kh^{n-1}.
\end{align*}
Set $b=1-c^{-1}$. Applying $1\geq (h')^2$ to the term $2\tau(\psi')^2/h^2$ in $K$, we have
\begin{align*}
K&\geq (b(n-1)^2+n-1+2\tau)\frac{(\psi')^2(h')^2}{h^2}-2\frac{\psi\psi'h'}{h^3}\left(b\tau(n-1)+3\tau\right)\\
&+(b\tau^2+\tau(\tau-n+2))\frac{\psi^2}{h^4}+2\tau\psi^2\frac{(h')^2}{h^4}\\
&\geq -\frac{\left(b\tau(n-1)+3\tau\right)^2}{b(n-1)^2+n-1+2\tau}\frac{\psi^2}{h^4}+(b\tau^2+\tau(\tau-n+2))\frac{\psi^2}{h^4}+2\tau\psi^2\frac{(h')^2}{h^4},
\end{align*}
where we have assumed $b(n-1)^2+n-1+2\tau>0$ for the last inequality holding.

Again by Proposition~\ref{prop-A} in the Appendix we have
\begin{align*}
&\int_{B_R} Ric(\nabla\varphi,\nabla\varphi)\geq \tau(n-2)\int_0^R (1-(h')^2)\frac{\psi^2}{h^4}h^{n-1}=:\int_0^R Lh^{n-1}.
\end{align*}
\begin{rem}\label{key-remark}
Here we separate a nontrivial positive term $\int_0^R Lh^{n-1}$ from $\int_{B_R} Ric(\nabla\varphi,\nabla\varphi)$ to conclude the proof below. Without this term we are unable to get the optimal lower bound for the case $n\geq 4$ and $m=1$. We believe this is a new feature on the use of the Reilly's formula, which may be applied to other problems.
\end{rem}
As a consequence, we get
\begin{align*}
K+L\geq &\left(-\frac{\left(b\tau(n-1)+3\tau\right)^2}{b(n-1)^2+n-1+2\tau}+b\tau^2+\tau(\tau-n+2)+\tau(n-2)\right)\frac{\psi^2}{h^4}\\
&-(n-4)\tau \frac{\psi^2(h')^2}{h^4}.
\end{align*}

\textbf{Case 1}: $n\geq 4$ and $m\geq 2$. Using $(h')^2\leq 1$ we have
\begin{align*}
&K+L\geq \left(-\frac{\left(b\tau(n-1)+3\tau\right)^2}{b(n-1)^2+n-1+2\tau}+b\tau^2+\tau(\tau-n+2)+2\tau\right)\frac{\psi^2}{h^4}.
\end{align*}
When
\begin{equation*}
b=\frac{-2\tau-n+4}{2\tau+(n-1)(n-4)}, \text{ or } c=\frac{2\tau+(n-1)(n-4)}{4\tau+n(n-4)},
\end{equation*}
we have $K+L\geq 0$. For this $b$, we can check $b(n-1)^2+n-1+2\tau>0$.

\textbf{Case 2}: $n\geq 4$ and $m= 1$. In this case we still choose
\begin{equation*}
b=\frac{-2\tau-n+4}{2\tau+(n-1)(n-4)}=-\frac{3}{n-1}, \text{ or } c=\frac{n-1}{n+2}.
\end{equation*}
For this $b$ we have $b(n-1)^2+n-1+2\tau=0$. However, we can directly check $K+L\geq 0$ to achieve our goal.

\textbf{Case 3}: $n= 3$ and $m\geq 2$. In this case we have
\begin{align*}
&K+L\geq \left(-\frac{\left(2b+3\right)^2\tau^2}{4b+2\tau+2}+(b+1)\tau^2\right)\frac{\psi^2}{h^4}.
\end{align*}
So when
\begin{equation*}
b=-\frac{2\tau-7}{2\tau-6}, \text{ or } c=\frac{2\tau-6}{4\tau-13},
\end{equation*}
we have $K+L\geq 0$. For this $b$, we can check $4b+2\tau+2=2(\tau-2)^2/(\tau-3)>0$.

In summary, for $n\geq 4$ and $m\geq 1$, we have
\begin{align*}
\xi_{(m)}\geq \frac{1}{c}\kappa \lambda_{(m)}&=\frac{(4\tau_m+n(n-4))\tau_m}{2\tau_m+(n-1)(n-4)}\frac{h'(R)}{h^3(R)}.
\end{align*}
For $n=3$ and $m\geq 2$, we have
\begin{align*}
\xi_{(m)}&\geq \frac{1}{c}\kappa \lambda_{(m)}=\frac{(4\tau_m-13)\tau_m}{2\tau_m-6}\frac{h'(R)}{h^3(R)}.
\end{align*}

So we finish the proof of the inequality parts of Theorem~\ref{thm2} for these two cases.

Finally, let $n\geq 4$ and $m=1$, and assume that the equality in \eqref{bound4} holds. So all the inequalities in this subsection become equalities. Then it is straightforward to check that $h'(r)\equiv 1$, and $\psi(r)=r(r^2-3R^2)$ up to a constant multiple, which is exactly the radial part of a first nontrivial eigenfunction for the Euclidean ball $B_R$ (see \cite[Theorem~1.5]{XW18}). So the warped product manifold is isometric to the Euclidean ball with radius $R$, and the proof is complete.

\subsection{Discussion on $3$-dimensional case}

Let us briefly discuss the higher dimensional case $n\geq 3$ and $m=1$ by use of the approach in Section~\ref{sec4.2}. In this case, letting $\tilde{b}(s)=\tilde{\psi}(r(s))$ and $b(s)=\psi(r(s))$, we need to solve
\begin{align*}
&\tilde{b}''(s)-(n-1)h^{2(n-2)}(r(s))\tilde{b}(s)=0,\\
&b''(s)-(n-1)h^{2(n-2)}(r(s))b(s)=h^{2(n-1)}(r(s))\tilde{b}(s).
\end{align*}
Assume that $\psi_1(s)>0$ and $\psi_2(s)>0$ are two fundamental solutions of the first equation which satisfy
\begin{align*}
&\psi_1(-\infty)=0,\quad \psi_1'(s)>0, \\
&\psi_2(-\infty)=+\infty,\quad  \psi_2'(s)<0.
\end{align*}
Moreover, assume that $\psi_1$ and $\psi_2$ have good rate of decay or growth at infinity such that the intermediate argument as in the case $n=2$ still works. Then we can check that finally the problem reduces to verifying
\begin{align*}
&G(s):=\frac{(\psi_1'(s))^2h^{4-n}(r(s))}{h'(r(s))}-(n+2)\int_{-\infty}^{s} (\psi_1(t))^2h^{2(n-1)}(r(t))dt\geq 0
\end{align*}
for $s=s_0$.

Note that $G(-\infty)\geq 0$. So we may need to prove $G'(s)\geq 0$ for $s\leq s_0$. Now we have
\begin{align*}
G'(s)&=\frac{2\psi_1'(s)\psi_1''(s)h^{4-n}(r(s))}{h'(r(s))}+(4-n)\frac{(\psi_1'(s))^2h^{4-n}(r(s))h'(r(s))}{h'(r(s))}\\
&-\frac{(\psi_1'(s))^2h^{5-n}(r(s))h''(r(s))}{(h'(r(s)))^2}-(n+2)(\psi_1(s))^2h^{2(n-1)}(r(s))\\
&\geq 2(n-1)\psi_1'\psi_1 h^n+(4-n)(\psi_1')^2h^{4-n}-(n+2)(\psi_1)^2h^{2(n-1)}.
\end{align*}

Of course when $n=2$, we get $\psi_1(s)=e^s$ and so $G'(s)\geq 0$. However, for $n\geq 3$, it seems hard to show that $G'(s)\geq 0$, which indicates that the case $n=2$ is quite special.

\subsection{The case $Ric_g\leq 0$}\label{sec4.5}
 One natural question is to consider Theorem~\ref{thm2} for the case $Ric_g\leq 0$. Assume $Ric_g\leq 0$. By checking the proof of Proposition~\ref{prop3}, we may still assume that all the eigenfunctions are of the form $\psi(r)\omega(p)$ with separate variables, and we see that the energies $\beta_k$ are still discrete and correspond to some eigenvalues. However, we are unable to prove that $\beta_k$ is monotone in $m(k)$. In other words, if $k_1$ and $k_2$ are such that $m(k_1)>m(k_2)$, we do not know whether $\beta_{k_1}>\beta_{k_2}$ or not. This means that we can no longer determine the order of these eigenvalues $\beta_k$.

Regardless of this point, if we still denote by $\xi_{(m)}$ the eigenvalue corresponding to an eigenfunction $\psi(r)\omega(p)$ with the spherical harmonic $\omega$ being of degree $m$, then we can prove an optimal upper bound
\begin{equation*}
\xi_{(m)}\leq 2m^2(m+1)\frac{h'(R)}{h^3(R)},
\end{equation*}
for $n=2$ and $m\geq 1$. This can be checked by using $h''(r)\geq 0$ and $h'(r)\geq 1$ in Section~\ref{sec4.2} instead of $h''(r)\leq 0$ and $0<h'(r)\leq 1$.

\section{Proof of Theorem~\ref{thm3}}\label{sec5}

The fourth-order Steklov eigenvalue problem we are concerned with in this section is:
\begin{equation}\label{S5.1}
\begin{cases}
\Delta^2 \varphi=0,\quad &\text{in }M,\\
\varphi=0,\quad \Delta \varphi=\eta  \dfrac{\pt \varphi}{\pt \nu},\quad & \text{on }\pt M.
\end{cases}
\end{equation}
Its $k$th eigenvalue $\eta_k$ has the variational characterization:
\begin{equation}\label{V5.2}
\eta_k=\inf_{\substack{ \varphi\in H^2(M),\: \varphi|_{\pt M}=0,\:\pt_\nu \varphi|_{\pt M}\neq 0\\  \int_{\pt M}\varphi \varphi_i da_g=0,\: i=0,\cdots, k-1}}\frac{\int_M(\Delta \varphi)^2dv_g}{\int_{\pt M} (\pt_\nu \varphi)^2da_g},
\end{equation}
where $\varphi_i$ is the $i$th eigenfunction.

\subsection{The characterization of the Steklov eigenfunctions}
Similar to Proposition~\ref{prop1}, we may characterize the eigenfunctions of the problem~\eqref{S5.1} as follows.
\begin{prop}\label{prop4}
For the warped product manifold $M$ in Theorem~\ref{thm3}, the first eigenfunction $\varphi_0(r,p)$ is given by $\psi_0(r)$ up to a constant multiple, where $\psi_0$ is a nontrivial solution of the ODE
\begin{equation}\label{eq5.3}
\begin{cases}
\dfrac{1}{h^{n-1}}\dfrac{d}{dr}(h^{n-1}\dfrac{d}{dr}\psi)=\tilde{\psi},&\\
\dfrac{1}{h^{n-1}}\dfrac{d}{dr}(h^{n-1}\dfrac{d}{dr}\tilde{\psi})=0,&\\
\psi'(0)=0,\quad \psi(R)=0,\quad \tilde{\psi}'(0)=0.&
\end{cases}
\end{equation}
Any higher-order eigenfunction $\varphi$ of the problem \eqref{S5.1} can be written as $\varphi(r,p)=\psi(r)\omega(p)$, where $\omega$ is a spherical harmonic on $\SS^{n-1}$ of some degree $m\geq 1$, i.e.,
\begin{equation*}
-\Delta_{\SS^{n-1}}\omega=\tau_m\omega \text{ on }\SS^{n-1},\quad \tau_m=m(n-2+m),
\end{equation*}
and $\psi$ is a nontrivial solution of the ODE
\begin{equation}\label{eq5.4}
\begin{cases}
\dfrac{1}{h^{n-1}}\dfrac{d}{dr}(h^{n-1}\dfrac{d}{dr}\psi)-\dfrac{\tau_{m}\psi}{h^2}=\tilde{\psi},&\\
\dfrac{1}{h^{n-1}}\dfrac{d}{dr}(h^{n-1}\dfrac{d}{dr}\tilde{\psi})-\dfrac{\tau_m\tilde\psi}{h^2}=0,&\\
\psi(0)=0,\quad  \psi(R)=0,\quad \tilde{\psi}(0)=0.&
\end{cases}
\end{equation}
For any nontrivial solution $\psi$ to the ODE \eqref{eq5.3} or \eqref{eq5.4}, the $m$th eigenvalue $\eta_{(m)}$ ($m\geq 0$) without counting multiplicity is given by $\eta_{(m)}=\tilde{\psi}(R)/\psi'(R)$.
\end{prop}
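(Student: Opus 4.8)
The plan is to repeat, almost verbatim, the separation-of-variables scheme of Proposition~\ref{prop1} and Proposition~\ref{prop3}, the only genuine change being that the Neumann condition there is now replaced by the Dirichlet condition $\varphi|_{\pt M}=0$ of~\eqref{S5.1}. First I would fix a complete orthonormal basis $\{\omega_k\}_{k\geq 0}$ of $L^2(\SS^{n-1})$ of spherical harmonics, ordered by degree so that $\omega_0$ is constant, $\omega_1,\dots,\omega_n$ have degree $1$, and so forth, with $-\Delta_{\SS^{n-1}}\omega_k=\tau_{m(k)}\omega_k$. For the radial mode I would set $\varphi_0=\psi_0(r)$ with $\psi_0$ the solution of~\eqref{eq5.3}, where the Neumann-type conditions $\psi_0'(0)=\tilde\psi_0'(0)=0$ encode smoothness of a radial function at the origin. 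For each $k\geq 1$ I would let $(\psi_k,\tilde\psi_k)$ solve the coupled system~\eqref{eq5.4}, normalised by $\psi_k'(R)=1$; the conditions $\psi_k(0)=\tilde\psi_k(0)=0$ select the regular branch $\psi_k,\tilde\psi_k\sim r^{m(k)}$ at $r=0$ and discard the singular branch $r^{2-n-m(k)}$. One checks directly that $\Delta\varphi_k=\tilde\psi_k\omega_k$ and hence $\Delta^2\varphi_k=0$, while $\psi_k(R)=0$ gives $\varphi_k|_{\pt B_R}=0$, so each $\varphi_k$ is an admissible competitor.

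The completeness step, namely
\begin{equation*}
\mathrm{span}\{\varphi_k:k\in\N\}=\{\varphi\in C^\infty(B_R):\Delta^2\varphi=0\text{ in }B_R,\ \varphi=0\text{ on }\pt B_R\},
\end{equation*}
requires a twist relative to Proposition~\ref{prop3}: because every $\varphi_k$ already vanishes on $\pt B_R$, the coefficients cannot be recovered from boundary values, so I would instead match normal derivatives. Given a smooth biharmonic $\bar\varphi$ with $\bar\varphi|_{\pt B_R}=0$, expand $\pt_\nu\bar\varphi|_{\pt B_R}=\sum_k c_k\omega_k$ and put $f:=\bar\varphi-\sum_k c_k\varphi_k$. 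Then $f$ is biharmonic with $f=\pt_\nu f=0$ on $\pt B_R$, and the uniqueness for the clamped-plate problem---pairing $f$ with $\Delta^2 f$ and integrating by parts gives $\int_{B_R}(\Delta f)^2=0$, so $\Delta f=0$ and then $f\equiv 0$ by the maximum principle since $f|_{\pt B_R}=0$---forces $\bar\varphi=\sum_k c_k\varphi_k$.

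For the eigenvalue formula I would compute the Rayleigh quotient of each mode. Integrating $\int_0^R\tilde\psi_k^2\,h^{n-1}$ by parts twice, using the first equation of~\eqref{eq5.4} to rewrite one factor $\tilde\psi_k$ and the second to kill the remaining interior integral, with the boundary term at $r=R$ surviving only through $\psi_k(R)=0$ and the terms at the origin vanishing by the regularity $\psi_k,\tilde\psi_k\sim r^{m(k)}$, yields
\begin{equation*}
\int_{B_R}(\Delta\varphi_k)^2\,dv_g=\int_0^R\tilde\psi_k^2\,h^{n-1}\,dr=\tilde\psi_k(R)\,\psi_k'(R)\,h^{n-1}(R),
\end{equation*}
while $\int_{\pt B_R}(\pt_\nu\varphi_k)^2\,da_g=(\psi_k'(R))^2h^{n-1}(R)$. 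Hence the energy $\beta_k:=\tilde\psi_k(R)/\psi_k'(R)$ equals the quotient in~\eqref{V5.2}, which gives $\eta_{(m)}=\tilde\psi(R)/\psi'(R)$. To promote the $\beta_k$ to genuinely ordered eigenvalues and to pin down each eigenspace as the span of the degree-$m$ modes, I would reproduce the monotonicity argument of Proposition~\ref{prop3}: show that the numerator of $\beta_k$ is strictly increasing in $\tau_{m(k)}$, and use $\psi_k\omega$ as a test function in~\eqref{V5.2}, its membership in $H^2(B_R)$ being verified by applying the Reilly's formula~\eqref{Rei} on $B_R\setminus B_\e$ and letting $\e\to0+$.

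The point I expect to cost the most work is this ordering step. In the second-order setting of Proposition~\ref{prop1} monotonicity in $\tau_m$ was read off a single ODE, but here it must be extracted from the coupled fourth-order system~\eqref{eq5.4} under the Dirichlet normalisation $\psi_k(R)=0$, so the strict-increase estimate has to be carried out as in Proposition~\ref{prop3} and combined with the origin asymptotics forced by Assumption~(A) to make the integrations by parts and the $H^2$-test legitimate. Once this is secured, the radial case $m=0$ follows with no extra idea, the only change being the Neumann-type conditions $\psi_0'(0)=\tilde\psi_0'(0)=0$ of~\eqref{eq5.3} in place of $\psi(0)=\tilde\psi(0)=0$.
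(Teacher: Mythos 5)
Your proposal is correct and follows essentially the same route as the paper's proof: separation of variables, completeness established by expanding $\pt_\nu\bar\varphi|_{\pt B_R}$ in spherical harmonics (the same "twist" the paper uses) and invoking uniqueness for the clamped-plate problem, identification of the eigenvalues with the energies $\beta_k=\tilde\psi_k(R)/\psi_k'(R)$, and the ordering of the $\beta_k$ deferred to the monotonicity argument of Proposition~\ref{prop3} with the Reilly-formula verification of $H^2$ membership. The only cosmetic deviations are that you obtain the eigenvalue formula by integrating $\int_0^R\tilde\psi_k^2h^{n-1}$ by parts rather than reading it directly off the boundary condition $\Delta\varphi_k=\eta\,\pt_\nu\varphi_k$, and that you subsume the radial case $m=0$ under the monotonicity argument where the paper instead cites the known simplicity of $\eta_0$.
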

\begin{proof}
Take $\{\omega_k\}$, $k=0,1,\cdots$, to be a complete orthonormal basis of $L^2(\SS^{n-1})$ as in Proposition~\ref{prop1}. Let $\psi_0$ be the solution of \eqref{eq5.3} with additional requirement $\psi'(R)=1$. For $k\geq 1$, let $\psi_k$ and $\tilde{\psi}_k$ solve \eqref{eq5.4} with additional condition $\psi'(R)=1$ and with $m=m(k)$. Define $\varphi_k(r,p)=\psi_k(r)\omega_k(p)$, $k\in \N$. We claim that
\begin{equation*}
{span} \{\varphi_k,k\in \N\}=\{\varphi\in C^\infty(B_R):\Delta^2\varphi=0 \text{ in } B_R,\:  \varphi=0 \text{ on } \pt B_R\}.
\end{equation*}

To prove the claim, first we can check directly that $\Delta^2 \varphi_k=0$ in $B_R$ and $ \varphi_k=0$ on $\pt B_R$.

Now take any smooth $\bar{\varphi}$ with $\Delta^2\bar{\varphi}=0$ in $B_R$ and $ \bar{\varphi}=0$ on $\pt B_R$. Since the basis $\{\omega_k\}$, $k=0,1,\dots$, for $L^2(\SS^{n-1})$ is complete, we can first decompose
\begin{equation*}
\pt_\nu\bar\varphi|_{\pt B_R}=\sum_{k=0}^\infty c_k \omega_k,
\end{equation*}
for a sequence of constants $c_k$. Then the function $f:=\bar{\varphi}-\sum_{k=0}^\infty c_k \varphi_k$ satisfies
\begin{equation*}
\Delta^2 f=0, \text{ in }B_R,\quad f=0 \text{ and } \pt_\nu f=0, \text{ on }\pt B_R,
\end{equation*}
which leads to $f=0$, or $\bar{\varphi}=\sum_{k=0}^\infty c_k \varphi_k$. So we have proved the claim. In particular, any eigenfunction $\varphi$ can be written as
\begin{equation*}
\varphi=\sum_{k=0}^\infty c_k \varphi_k.
\end{equation*}
It follows that $\varphi_k$, $k\geq 0$ are all the eigenfunctions with corresponding eigenvalues given by $\tilde{\psi}_k(R)/\psi_k'(R)$. Note that if $k_1$ and $k_2$ are such that $m(k_1)=m(k_2)$, then $\varphi_{k_1}$ and $\varphi_{k_2}$ correspond to the same eigenvalue, since $\psi_{k_1}=\psi_{k_2}$. Therefore, $\varphi_0$ corresponds to an eigenvalue of multiplicity one; $\varphi_k$, $1\leq k\leq n$, correspond to an eigenvalue of multiplicity $n$; etc. Since the first eigenvalue is simple (see \cite[Theorem~1]{BGM06} or \cite{RS15}), we know $\eta_{(0)}=\eta_0=\tilde{\psi}_0(R)/\psi_0'(R)$. For the higher-order eigenvalues $\eta_{(m)}$, $m\geq 1$, we can determine them just as in Proposition~\ref{prop3}. Alternatively, we may use the Reilly's formula to prove it. Let $\varphi(r,p)=\psi(r)\omega(p)$ be a smooth function on $B_R$ with $ \varphi=0$ along $\pt B_R$, where $\omega$ is a spherical harmonic of degree $m$. Applying Proposition~\ref{prop-A} in the Appendix to the Reilly's formula~\eqref{Rei} for $\varphi$, we get
\begin{align*}
\int_{B_R}&(\Delta \varphi)^2=\int_{B_R} |\nabla^2\varphi|^2+\int_{B_R} Ric(\nabla\varphi,\nabla\varphi)+\int_{\pt B_R}H(\pt_\nu \varphi)^2\\
&=\int_0^R \left((\psi'')^2+(n-1)\frac{(\psi')^2(h')^2}{h^2}\right.\\
&\left.+\tau_m\frac{2}{h^2}\left((\psi')^2+\psi^2\frac{(h')^2}{h^2}-3\frac{\psi\psi'h'}{h}\right)+\frac{\psi^2}{h^4}\tau_m(\tau_m-n+2)\right)h^{n-1}\\
&-\int_0^R  \left((n-1)\frac{h''}{h}(\psi')^2+\tau_m\left(\frac{h''}{h}-(n-2)\frac{1-(h')^2}{h^2}\right)\frac{\psi^2}{h^2}\right)h^{n-1}\\
&+(n-1)\frac{h'(R)}{h(R)}(\psi'(R))^2h^{n-1}(R).
\end{align*}
Using $(h')^2\leq 1$, we can check directly that the above formula is strictly increasing in $m\geq 1$. In view of this fact, after further necessary arguments we can determine the order of all the energies
\begin{equation*}
\beta_{k}:=\frac{\tilde{\psi_k}(R)}{\psi_k'(R)}=\dfrac{\int_{B_R}(\Delta (\psi_k \omega_k))^2 dv_g}{\int_{\pt B_R} (\pt_\nu(\psi_k \omega_k))^2 da_g},\quad k\geq 1.
\end{equation*}
The details in this approach are left to interested readers. In either way we can finish the proof of Proposition~\ref{prop4}.

\end{proof}

By Proposition~\ref{prop4}, it suffices to prove that
\begin{equation*}
\eta_{(m)}=\frac{\tilde{\psi}(R)}{\psi'(R)}=\dfrac{\int_{B_R}(\Delta (\psi \omega))^2 dv_g}{\int_{\pt B_R} (\pt_\nu(\psi \omega))^2 da_g}
\end{equation*}
has the lower bound in each case of Theorem~\ref{thm3}, where $\psi$ and $\tilde{\psi}$ solve \eqref{eq5.3} or \eqref{eq5.4} and $\omega$ is a spherical harmonic of degree $m$.

\subsection{Proof of Theorem~\ref{thm3} for $n=2$}\label{sec5.2}

The proof proceeds as in Section~\ref{sec4.2}. Here we only sketch it.

Let $n=2$ and $m\geq 1$. Using the change of variable
\begin{equation*}
s(r):=\int_{R/2}^r\frac{dt}{h(t)},
\end{equation*}
and setting $\tilde{b}(s):=\tilde{\psi}(r(s))$ and $b(s):=\psi(r(s))$, we need to solve
\begin{align*}
&\tilde{b}''(s)-m^2\tilde{b}(s)=0,\\
&b''(s)-m^2b(s)=\tilde{b}(s)h^2(r(s)),
\end{align*}
with boundary conditions $\tilde{b}(-\infty)=0$, $b(-\infty)=0$ and $b(s_0)=0$.

The solution of the above ODE up to a constant multiple is given by
\begin{align*}
\tilde b(s)&=e^{ms},\\
b(s)&=\frac{1}{2m}e^{ms}\int_{s_0}^s h^2(r(t))dt+\frac{1}{2m}e^{m(s-2s_0)}\int_{-\infty}^{s_0}e^{2mt}h^2(r(t))dt\\
&-\frac{1}{2m}e^{-ms}\int_{-\infty}^se^{2mt}h^2(r(t))dt.
\end{align*}
In particular, we obtain
\begin{equation*}
b'(s_0)=e^{-ms_0}\int_{-\infty}^{s_0}e^{2mt}h^2(r(t))dt.
\end{equation*}
Since $\psi'(r)=b'(s)s'(r)=b'(s)/h(r)$, to prove Theorem~\ref{thm3} for $n=2$ it suffices to prove
\begin{equation*}
\frac{e^{ms(R)}h(R)}{b'(s(R))}\geq 2(m+1)\frac{h'(R)}{h(R)},\text{ or } e^{ms_0}\geq 2(m+1)b'(s_0)\frac{h'(r(s_0))}{h^2(r(s_0))}.
\end{equation*}
Then we need to check that the function
\begin{equation*}
G(s):=\frac{e^{2ms}h^2(r(s))}{h'(r(s))}-2(m+1)\int_{-\infty}^{s}e^{2mt}h^2(r(t))dt
\end{equation*}
satisfies $G(s_0)\geq 0$, which follows from Lemma~\ref{lem-G}.

So we have the lower bound \eqref{bound5} for $n=2$ and $m\geq 1$ in Theorem~\ref{thm3}. Moreover, when the equality holds, we must have $h''(r)\equiv 0$ and $h'(r)\equiv 1$, which means $h(r)=r$. So we complete the proof.



\subsection{Proof of Theorem~\ref{thm3} for $n\geq 3$}\label{sec5.3}

Let $\varphi(r,p)=\psi(r)\omega(p)$ be an eigenfunction corresponding to $\eta_{(m)}$ with $m\geq 1$. So we have
\begin{align*}
&\int_{B_R}(\Delta \varphi)^2=\eta_{(m)}\int_{\pt B_R} (\pt_\nu\varphi)^2,\\
&\int_{\pt B_R}H(\pt_\nu \varphi)^2=(n-1)\kappa \int_{\pt B_R} (\pt_\nu\varphi)^2,
\end{align*}
where $\kappa=h'(R)/h(R)$ denotes the principal curvature of the boundary $\pt B_R$.

By the Reilly's formula~\eqref{Rei}, for fixed $c\in (0,1)$, we obtain
\begin{align*}
&(c\eta_{(m)}-(n-1)\kappa)\int_{\pt B_R} (\pt_\nu\varphi)^2\\
&=\int_{B_R} |\nabla^2\varphi|^2-(1-c)\int_{B_R}(\Delta \varphi)^2+\int_{B_R} Ric(\nabla\varphi,\nabla\varphi).
\end{align*}
Again our goal is to find $c$ as small as possible such that the right-hand side of the above formula is nonnegative, which can be achieved as in Section~\ref{sec4.3}.

So according to the argument in Section~\ref{sec4.3}, for $n\geq 4$ and $m\geq 1$, we have
\begin{align*}
\eta_{(m)}\geq \frac{1}{c}(n-1)\kappa &=\frac{(4\tau_m+n(n-4))(n-1)}{2\tau_m+(n-1)(n-4)}\frac{h'(R)}{h(R)}.
\end{align*}
For $n=3$ and $m\geq 2$, we have
\begin{align*}
\eta_{(m)}&\geq \frac{1}{c}2\kappa =\frac{4\tau_m-13}{\tau_m-3}\frac{h'(R)}{h(R)}.
\end{align*}

Hence we finish the proof of the inequality parts of Theorem~\ref{thm3} for these two cases.

Finally, let $n\geq 4$ and $m=1$, and assume that the equality in \eqref{bound7} holds. So all the inequalities along the corresponding argument in Section~\ref{sec4.3} become equalities. Then it is straightforward to check that $h'(r)\equiv 1$, and $\psi(r)=r(r^2-R^2)$ up to a constant multiple, which is exactly the radial part of the corresponding eigenfunction for the Euclidean ball $B_R$ (see \cite[Theorem~1.3]{FGW05}). So the warped product manifold is isometric to the Euclidean ball with radius $R$, and the proof is complete.

\subsection{The case $Ric_g\leq 0$}\label{5.4}

Assume $Ric_g\leq 0$. As explained in Section~\ref{sec4.5}, in this case we can no longer determine the order of the eigenvalues $\beta_k$ (the energies), except the first eigenvalue which is a simple one.

If we still denote by $\eta_{(m)}$ the eigenvalue corresponding to an eigenfunction $\psi(r)\omega(p)$ with the spherical harmonic $\omega$ being of degree $m$, then we can prove an optimal upper bound
\begin{equation*}
\eta_{(m)}\leq 2(m+1)\frac{h'(R)}{h(R)},
\end{equation*}
for $n=2$ and $m\geq 0$. This can be checked by using $h''(r)\geq 0$ and $h'(r)\geq 1$ in Section~\ref{sec5.2} instead of $h''(r)\leq 0$ and $0<h'(r)\leq 1$.

\section*{Appendix}

\setcounter{equation}{0}
\renewcommand{\theequation}{A.\arabic{equation}}

Here we present some computation results which are needed in the main part of this paper.
\begin{prop}\label{prop-A}
Let $M^n=[0,R)\times \SS^{n-1}$ be an $n$-dimensional ($n\geq 2$) smooth Riemannian manifold equipped with the warped product metric
\begin{equation*}
g=dr^2+h^2(r)g_{\SS^{n-1}},
\end{equation*}
where the warping function $h$ satisfies Assumption (A). Assume that $\varphi(r,p)=\psi(r)\omega(p)$, $r\in [0,R)$, $p\in \SS^{n-1}$, is a smooth function on $M$, where $\omega$ is a spherical harmonic of degree $m$, i.e., $-\Delta_{\SS^{n-1}}\omega=\tau_m\omega$, $\tau_m=m(n-2+m)$. Then we have
\begin{align*}
\int_{B_R}&|\nabla^2\varphi|^2=\int_0^R \left((\psi'')^2+(n-1)\frac{(\psi')^2(h')^2}{h^2}\right.\\
&\left.+\tau_m\frac{2}{h^2}\left((\psi')^2+\psi^2\frac{(h')^2}{h^2}-3\frac{\psi\psi'h'}{h}\right)+\frac{\psi^2}{h^4}\tau_m(\tau_m-n+2)\right)h^{n-1},\\
\int_{B_R}&(\Delta \varphi)^2=\int_0^R \left(\psi''+(n-1)\psi'\frac{h'}{h}-\tau_m\frac{\psi}{h^2}\right)^2h^{n-1},
\end{align*}
and
\begin{align*}
&\int_{B_R} Ric(\nabla\varphi,\nabla\varphi)=\\
&-\int_0^R  \left((n-1)\frac{h''}{h}(\psi')^2+\tau_m\left(\frac{h''}{h}-(n-2)\frac{1-(h')^2}{h^2}\right)\frac{\psi^2}{h^2}\right)h^{n-1}.
\end{align*}
\end{prop}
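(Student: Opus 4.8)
The plan is to compute everything in product coordinates $(r,\theta^1,\dots,\theta^{n-1})$, where $\theta$ are local coordinates on $\SS^{n-1}$ with round metric $\sigma=g_{\SS^{n-1}}$, so that $g_{rr}=1$, $g_{\alpha\beta}=h^2\sigma_{\alpha\beta}$, $g^{rr}=1$ and $g^{\alpha\beta}=h^{-2}\sigma^{\alpha\beta}$. First I would record the nonzero Christoffel symbols of the warped product, namely $\Gamma^r_{\alpha\beta}=-hh'\sigma_{\alpha\beta}$, $\Gamma^\alpha_{r\beta}=(h'/h)\delta^\alpha_\beta$ and $\Gamma^\gamma_{\alpha\beta}=\hat\Gamma^\gamma_{\alpha\beta}$ (the symbols of $\sigma$). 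Applying $\nabla^2\varphi=\partial^2\varphi-\Gamma\,\partial\varphi$ to $\varphi=\psi(r)\omega(\theta)$ then splits the Hessian into three blocks:
\begin{equation*}
(\nabla^2\varphi)_{rr}=\psi''\omega,\quad (\nabla^2\varphi)_{r\alpha}=\Big(\psi'-\frac{h'}{h}\psi\Big)\partial_\alpha\omega,\quad (\nabla^2\varphi)_{\alpha\beta}=\psi(\hat\nabla^2\omega)_{\alpha\beta}+hh'\psi'\omega\,\sigma_{\alpha\beta},
\end{equation*}
where $\hat\nabla$ denotes the connection of $\sigma$.

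For $(\Delta\varphi)^2$ I would simply trace the Hessian: using $\sigma^{\alpha\beta}(\hat\nabla^2\omega)_{\alpha\beta}=\hat\Delta\omega=-\tau_m\omega$ gives $\Delta\varphi=(\psi''+(n-1)(h'/h)\psi'-\tau_m h^{-2}\psi)\omega$, so squaring, integrating over $\SS^{n-1}$ with the normalization $\int_{\SS^{n-1}}\omega^2=1$, and attaching the factor $h^{n-1}\,dr$ yields the middle identity. The norm $|\nabla^2\varphi|^2=g^{ik}g^{jl}(\nabla^2\varphi)_{ij}(\nabla^2\varphi)_{kl}$ is the delicate step. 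Contracting block by block (the mixed block enters with a factor $2$ by symmetry) produces $(\psi'')^2\omega^2$, the term $2h^{-2}(\psi'-(h'/h)\psi)^2|\hat\nabla\omega|^2$, and from the $\alpha\beta$ block the sum $h^{-4}\psi^2|\hat\nabla^2\omega|^2-2h^{-3}h'\tau_m\psi\psi'\omega^2+(n-1)h^{-2}(h')^2(\psi')^2\omega^2$, where I use $\sigma^{\alpha\beta}\sigma_{\alpha\beta}=n-1$ and the trace identity above. To integrate $|\hat\nabla^2\omega|^2$ over the sphere I would invoke the Bochner formula on the round $\SS^{n-1}$, whose Ricci tensor equals $(n-2)\sigma$; integrating $\tfrac12\hat\Delta|\hat\nabla\omega|^2=|\hat\nabla^2\omega|^2-\tau_m|\hat\nabla\omega|^2+(n-2)|\hat\nabla\omega|^2$ over $\SS^{n-1}$ gives $\int_{\SS^{n-1}}|\hat\nabla^2\omega|^2=\tau_m(\tau_m-n+2)$, together with $\int_{\SS^{n-1}}|\hat\nabla\omega|^2=\tau_m$. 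Substituting these and collecting the $(\psi')^2$, $\psi\psi'$ and $\psi^2$ coefficients reproduces the first identity.

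For the Ricci term I would read off from the stated formula $\mathrm{Ric}_g=-A\,g-B\,dr^2$ with $A=h''/h-(n-2)(1-(h')^2)/h^2$ and $B=(n-2)(h''/h+(1-(h')^2)/h^2)$, so that $\mathrm{Ric}(\nabla\varphi,\nabla\varphi)=-A|\nabla\varphi|^2-B(\partial_r\varphi)^2$. Since $|\nabla\varphi|^2=(\psi')^2\omega^2+h^{-2}\psi^2|\hat\nabla\omega|^2$ and $\partial_r\varphi=\psi'\omega$, integrating over $\SS^{n-1}$ and using $A+B=(n-1)h''/h$ together with $\int_{\SS^{n-1}}|\hat\nabla\omega|^2=\tau_m$ gives the third identity after the factor $h^{n-1}\,dr$. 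I expect the main obstacle to be the Hessian-norm computation: organizing the three-block contraction correctly and, above all, evaluating $\int_{\SS^{n-1}}|\hat\nabla^2\omega|^2$ through the spherical Bochner identity; the remainder is careful but routine bookkeeping of cross terms, in which the coefficient $-3$ multiplying $\psi\psi'h'/h$ emerges from combining the cross term of the mixed block with that of the $\alpha\beta$ block.
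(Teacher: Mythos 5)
Your proposal is correct and follows essentially the same route as the paper: identical Hessian block formulas (the paper derives them via $g(\nabla_X\nabla\varphi,Y)$ rather than Christoffel symbols, a cosmetic difference), the same block-by-block contraction producing the $-3$ cross-term coefficient, the same use of the spherical Bochner identity to evaluate $\int_{\SS^{n-1}}|\hat\nabla^2\omega|^2=\tau_m(\tau_m-n+2)$, and the same regrouping of $\mathrm{Ric}_g$ into a $dr^2$ part with coefficient $-(n-1)h''/h$ and a spherical part.
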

\begin{proof}

In the proof we write $\tau$ for $\tau_m$ for the sake of simplicity. Take a local coordinate system $\{\theta^1,\dots,\theta^{n-1}\}$ for $\SS^{n-1}$. Let ${e_{ij}}$ be the components of the metric $g_{\SS^{n-1}}$ with respect to this coordinate, i.e.,
\begin{equation*}
g_{\SS^{n-1}}=e_{ij}d\theta^id\theta^j,
\end{equation*}
and so
\begin{equation*}
g=dr^2+h^2e_{ij}d\theta^i d\theta^j.
\end{equation*}

First we have
\begin{equation*}
\nabla\varphi=\psi' \omega \pt_r+\frac{\psi}{h^2}e^{ij}\omega_i\pt_j,
\end{equation*}
where $\pt_j$ denotes the vector field $\dfrac{\pt}{\pt \theta^j}$.

So we get
\begin{align*}
\nabla^2\varphi(\pt_r,\pt_r)&=g(\nabla_{\pt_r}(\nabla\varphi),\pt_r)=\psi''\omega.
\end{align*}

Next we deduce
\begin{align*}
\nabla^2\varphi(\pt_r,\pt_i)&=g(\nabla_{\pt_r}(\nabla\varphi),\pt_i)=g(\nabla_{\pt_r}(\psi' \omega \pt_r+\frac{\psi}{h^2}e^{kl}\omega_k\pt_l),\pt_i)\\
&=(\frac{\psi'}{h^2}-2\frac{h'\psi}{h^3})h^2\omega_i+\psi\frac{h'}{h}\omega_i=(\psi'-\psi \frac{h'}{h})\omega_i,
\end{align*}
where we have used
\begin{equation*}
\nabla_{\pt_r}\pt_l=\nabla_{\pt_l}\pt_r=\frac{h'}{h}\pt_l.
\end{equation*}

Finally we obtain
\begin{align*}
\nabla^2&\varphi(\pt_i,\pt_j)=g(\nabla_{\pt_i}(\nabla\varphi),\pt_j)=g(\nabla_{\pt_i}(\psi' \omega \pt_r+\frac{\psi}{h^2}e^{kl}\omega_k\pt_l),\pt_j)\\
&=\psi'\omega hh'e_{ij}+\psi \nabla^S_{ij}\omega,
\end{align*}
where $\nabla^S$ denotes the connection on $\SS^{n-1}$.

As a consequence, we derive
\begin{align*}
|\nabla^2\varphi|^2&=(\varphi_{rr})^2+2g^{ij}\varphi_{ri}\varphi_{rj}+h^{-4}e^{ij}e^{kl}\varphi_{ik}\varphi_{jl}\\
&=(\psi'')^2\omega^2+2h^{-2}(\psi'-\psi \frac{h'}{h})^2|\nabla^S \omega|^2\\
&+h^{-4}\left((\psi'\omega h h')^2(n-1)+2\psi\psi'\omega hh'\Delta^S \omega+\psi^2|(\nabla^S)^2 \omega|^2\right).
\end{align*}
Therefore we have
\begin{align*}
\int_{B_R}&|\nabla^2\varphi|^2=\int_0^R \left((\psi'')^2+(n-1)\frac{(\psi')^2(h')^2}{h^2}\right.\\
&\left.+\tau\frac{2}{h^2}\left((\psi')^2+\psi^2\frac{(h')^2}{h^2}-3\frac{\psi\psi'h'}{h}\right)+\frac{\psi^2}{h^4}\int_{\SS^{n-1}}|(\nabla^S)^2 \omega|^2da\right)h^{n-1}.
\end{align*}
Recall the Bochner's formula \eqref{Boc} on $\SS^{n-1}$,
\begin{equation*}
\frac{1}{2}\Delta^{S}(|\nabla^S \omega|^2)=|(\nabla^S)^2 \omega|^2+g_{\SS^{n-1}}(\nabla^S \omega,\nabla^S(\Delta^S \omega))+\mathrm{Ric}_{\SS^{n-1}}(\nabla^S \omega,\nabla^S \omega).
\end{equation*}
Note that $\Delta^S \omega=-\tau\omega$ and $\mathrm{Ric}_{\SS^{n-1}}=(n-2)g_{\SS^{n-1}}$. So after integration we have
\begin{equation*}
\int_{\SS^{n-1}}|(\nabla^S)^2 \omega|^2da=\tau(\tau-n+2).
\end{equation*}
Thus we obtain
\begin{align*}
\int_{B_R}&|\nabla^2\varphi|^2=\int_0^R \left((\psi'')^2+(n-1)\frac{(\psi')^2(h')^2}{h^2}\right.\\
&\left.+\tau\frac{2}{h^2}\left((\psi')^2+\psi^2\frac{(h')^2}{h^2}-3\frac{\psi\psi'h'}{h}\right)+\frac{\psi^2}{h^4}\tau(\tau-n+2)\right)h^{n-1}.
\end{align*}

Next we have
\begin{align*}
(\Delta \varphi)^2&=\left(\psi''+(n-1)\psi'\frac{h'}{h}-\tau\frac{\psi}{h^2}\right)^2\omega^2,
\end{align*}
which implies
\begin{align*}
&\int_{B_R}(\Delta \varphi)^2=\int_0^R \left(\psi''+(n-1)\psi'\frac{h'}{h}-\tau\frac{\psi}{h^2}\right)^2h^{n-1}.
\end{align*}

Lastly recall
\begin{align*}
Ric_g&=-\left(\frac{h''}{h}-(n-2)\frac{1-(h')^2}{h^2}\right)g-(n-2)\left(\frac{h''}{h}+\frac{1-(h')^2}{h^2}\right)dr^2\\
&=-(n-1)\frac{h''}{h}dr^2-\left(\frac{h''}{h}-(n-2)\frac{1-(h')^2}{h^2}\right)h^2g_{\SS^{n-1}}.
\end{align*}
So
\begin{align*}
&Ric(\nabla\varphi,\nabla\varphi)=-(n-1)\frac{h''}{h}(\psi')^2\omega^2-\left(\frac{h''}{h}-(n-2)\frac{1-(h')^2}{h^2}\right)\frac{\psi^2}{h^2}|\nabla^S \omega|^2,
\end{align*}
which yields
\begin{align*}
&\int_{B_R} Ric(\nabla\varphi,\nabla\varphi)=\\
&-\int_0^R  \left((n-1)\frac{h''}{h}(\psi')^2+\tau\left(\frac{h''}{h}-(n-2)\frac{1-(h')^2}{h^2}\right)\frac{\psi^2}{h^2}\right)h^{n-1}.
\end{align*}

So the proof is complete.

\end{proof}


\bibliographystyle{Plain}

\end{document}